\def\th@plain{%
  \itshape 
}
\renewenvironment{proof}[1][\proofname]{\par
  \pushQED{\qed}%
  \normalfont \topsep6\p@\@plus6\p@\relax
  \trivlist
  \item[\hskip\labelsep
        \bfseries
    #1\@addpunct{.}]\ignorespaces
}{%
  \popQED\endtrivlist\@endpefalse
}
\newtheorem{theorem}{Theorem}[section]
\numberwithin{equation}{section}
\newtheorem{lemma}[theorem]{Lemma}
\newtheorem{lem}[theorem]{Lemma}
\newtheorem{conj}[theorem]{Conjecture}
\newtheorem{question}[theorem]{Question}
\numberwithin{equation}{section}
\numberwithin{equation}{section}
\begin{document}

\title{\LARGE Fast algorithm for $S$-packing coloring of Halin graphs}
\author{Xin Zhang \quad \quad Dezhi Zou\\
{\small School of Mathematics and Statistics, Xidian University, Xi'an, 710071, China}\\
{\small xzhang@xidian.edu.cn \quad  dezhizou@stu.xidian.edu.cn}
}


\maketitle

\begin{abstract}\baselineskip 0.60cm
Motivated by frequency assignment problems in wireless broadcast networks,
Goddard, Hedetniemi, Hedetniemi, Harris, and Rall introduced the notion of \(S\)-packing coloring  in 2008. Given a non-decreasing sequence \(S = (s_1, s_2, \ldots, s_k)\) of positive integers, an \(S\)-packing coloring of a graph \(G\) is a partition of its vertex set into \(k\) subsets \(\{V_1, V_2, \ldots, V_k\}\) such that for each \(1 \leq i \leq k\), the distance between any two distinct vertices \(u, v \in V_i\) is at least \(s_i + 1\). In this paper, we study the \(S\)-packing coloring problem for Halin graphs with maximum degree \(\Delta \leq 5\). Specifically, we present a linear-time algorithm that constructs a \((1,1,2,2,2)\)-packing coloring for any Halin graph satisfying \(\Delta \leq 5\). It is worth noting that there are Halin graphs that are not $(1,2,2,2)$-packing colorable.

\vspace{3mm}\noindent \emph{Keywords}: vertex coloring; packing coloring; Halin graph.
\end{abstract}

\baselineskip 0.60cm

\section{Introduction}\label{sec1}

Let $ S = (s_1, s_2, \ldots, s_k) $ be a non-decreasing sequence of positive integers (i.e., $ s_1 \leq s_2 \leq \cdots \leq s_k $). An \textit{$ S $-packing coloring} of a simple graph $ G = (V(G), E(G)) $ is a partition of $ V(G) $ into $ k $ disjoint subsets $ V_1, V_2, \ldots, V_k $ such that for each $ i \in [k]:=\{1, 2, \ldots, k\} $, any two distinct vertices $ u, v \in V_i $ satisfy $ d_G(u, v) \geq s_i + 1 $, where $ d_G(u, v) $ denotes the distance between $ u $ and $ v $ in $ G $. The \textit{packing chromatic number} of $ G $, denoted $ \chi_p(G) $, is the smallest integer $ k $ for which $ G $ admits a $(1, 2, \ldots, k)$-packing coloring (also called a \textit{$ k $-packing coloring}). The concept of packing chromatic number was introduced by Goddard, Hedetniemi, Hedetniemi, Harris, and Rall \cite{goddard2008broadcast} in 2008 under the original name \emph{broadcast chromatic number}, motivated by frequency assignment problems in wireless broadcast networks.
 
A \emph{proper $k$-coloring} of $G$ is a mapping $\varphi: V(G) \to [k]$ such that for any adjacent vertices $u, v \in V(G)$, $\varphi(u) \neq \varphi(v)$; if such a mapping exists, then $G$ is said to be \emph{$k$-colorable}. In contrast, a \emph{2-distance coloring} of $G$ is a vertex coloring $\varphi: V(G) \to [k]$ such that for any two distinct vertices $u, v \in V(G)$ with $d_G(u, v) \leq 2$, $\varphi(u) \neq \varphi(v)$; the minimum $k$ for which this is possible is the \emph{2-distance chromatic number} $\chi_2(G)$. Notably, these concepts relate to packing colorings: a $(1, 1, \ldots, 1)$-packing coloring corresponds to an ordinary proper coloring, while a $(2, 2, \ldots, 2)$-packing coloring aligns with 2-distance coloring. Furthermore, the $(2, 2, \ldots, 2)$-packing coloring is closely related to, but strictly stronger than, the concept of \textit{exact 2-distance coloring} \cite[Section 11.9]{nesetril2012sparsity}: while an exact 2-distance coloring requires only that vertices at distance exactly 2 receive distinct colors (allowing adjacent vertices to share colors), a $(2, 2, \ldots, 2)$-packing coloring enforces that all pairs of vertices within each color class are at distance at least 3, thereby implying a valid exact 2-distance coloring but not vice versa. If graphs are restricted to be triangle-free, then exact 2-distance coloring becomes equivalent to an \textit{injective coloring}---a coloring where any two vertices sharing a common neighbor receive distinct colors \cite{zbMATH07662009,zbMATH05572547}. It follows that even in triangle-free graphs, $(2, 2, \ldots, 2)$-packing coloring remains strictly stronger than injective coloring. It is worth noting that Bu, Qi and Zhu \cite{zbMATH07367784} proved that Halin graphs with maximum degree at most five has an injective 7-coloring.
We refer the readers to further reading on these relative topics \cite{VANDENHEUVEL2019143,QUIROZ2020111769,PRIYAMVADA202284,FOUCAUD202174}.

We define $S(G)$ as the graph obtained by \emph{subdividing each edge} of $G$, i.e., replacing each edge $uv \in E(G)$ with a path $u$-$w$-$v$ through a new vertex $w$. The problem of bounding $\chi_p(G)$ and $\chi_p(S(G))$ for subcubic graphs (where each vertex has degree at most 3) has been investigated in several studies \cite{r4,r6}. 
Gastineau and Togni \cite{r6} were the first to ask whether $\chi_p(S(G)) \leq 5$ holds for all subcubic graphs $G$. This question was later formalized as a conjecture by Bre\v{s}ar et al. \cite{r4}, who proposed the following:
 
\begin{conj}\label{conj1}
   If $G$ is a subcubic graph, then $\chi_p(S(G)) \leq 5$.
\end{conj}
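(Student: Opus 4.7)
The plan is to attack Conjecture~\ref{conj1} inductively on $|V(G)|$, exploiting two structural facts about $S(G)$: every subdivision vertex has degree $2$ and these vertices form an independent set, so color $1$ is naturally available to them; and for two original vertices $u,v\in V(G)\subseteq V(S(G))$, the distance satisfies $d_{S(G)}(u,v)=2\,d_G(u,v)$. The second fact rephrases the color-$i$ distance constraint on original vertices as a $G$-distance requirement: colors $2,3$ demand pairwise $G$-distance $\geq 2$ (independent sets), while colors $4,5$ demand pairwise $G$-distance $\geq 3$ ($2$-packings). The natural first attempt is thus to color every subdivision vertex with color $1$ and seek a $(1,1,2,2)$-packing coloring of $G$ itself, i.e.\ a partition of $V(G)$ into two independent sets and two $2$-packings.

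I would first dispose of the easy regimes: bipartite subcubic graphs (two color classes suffice) and graphs where a Brooks-type $3$-coloring has a smallest class that splits into two $2$-packings. Some subcubic graphs resist the naive strategy---the Petersen graph is a warning example, since its diameter equals $2$, every $2$-packing is a single vertex, and any two of its maximum independent sets intersect in exactly one vertex, forcing any union of two disjoint independent sets to contain at most $7$ vertices. For such graphs the plan is to relax the rule that every subdivision vertex takes color $1$: selected subdivision vertices are upgraded to colors in $\{2,3,4,5\}$ to free up slack on the original vertices, and a local case analysis handles the reconfiguration in each neighborhood pattern.

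To turn this into a complete proof I would run a discharging argument on a minimum counterexample $G$. The first step is to establish basic reducibility (no degree-$1$ vertex, no two adjacent degree-$2$ vertices, no small edge cut, and so on), each case admitting an easy extension from a coloring of a smaller subcubic graph. Next I would compile a catalogue of local reducible configurations together with their extension recipes, and design a discharging scheme whose final charges contradict a global invariant of subcubic graphs. The hardest part will be completing this catalogue for the critical regime of cubic graphs of high girth, where neither short cycles nor low-degree vertices are available to exploit and the reducibility techniques of Gastineau--Togni and Bre\v sar et al.\ fall just short; a plausible way forward is to augment the local reductions with a global argument---such as an Eulerian-style orientation of $G$ that jointly selects the two $2$-packing classes---but integrating a global construction with a local discharging argument is delicate, which is why Conjecture~\ref{conj1} has so far remained open in general.
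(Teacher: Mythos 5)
The statement you were asked to prove is Conjecture~\ref{conj1}, which the paper does not prove and explicitly presents as open: it is Gastineau and Togni's question \cite{r6}, formalized as a conjecture by Bre\v{s}ar et al.\ \cite{r4}, and the best known bound cited in the paper is $\chi_p(S(G))\leq 6$, obtained by Liu, Zhang, and Zhang \cite{r2} via $(1^2,2^2,3)$-packing colorability of all subcubic graphs. So there is no proof in the paper to compare yours against, and your submission is not a proof either --- it is a research programme, as you yourself concede in the final sentence (``which is why Conjecture~\ref{conj1} has so far remained open in general''). A proof attempt that ends by acknowledging the statement is open has, by definition, a gap.

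To be concrete about where the gap lies: your preliminary observations are sound --- subdivision vertices form an independent set of degree-$2$ vertices, $d_{S(G)}(u,v)=2\,d_G(u,v)$ for original vertices, hence assigning color $1$ to all subdivision vertices reduces the problem to $(1^2,2^2)$-packing coloring $G$ (this is exactly the Gastineau--Togni reduction the paper quotes), and your Petersen-graph computation correctly shows that this reduction cannot succeed for all subcubic graphs, since two disjoint independent sets in the Petersen graph cover at most $7$ of its $10$ vertices while every $2$-packing is a singleton. But everything after that is unexecuted: the ``catalogue of local reducible configurations'' is never compiled, no discharging rules or final-charge contradiction are specified, the scheme for upgrading selected subdivision vertices to colors in $\{2,3,4,5\}$ is described only as a hope, and you explicitly flag the critical regime (cubic graphs of high girth) as one where your named techniques ``fall just short.'' The proposed Eulerian-orientation rescue is a single sentence with no construction behind it. In short, the proposal correctly identifies why the naive approach fails and where the difficulty concentrates, but it proves nothing beyond what is already in the cited literature, and it should not be presented as a proof of the conjecture.
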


For a graph \( G \) and its subdivision \( S(G) \), Gastineau and Togni \cite{r6} established a fundamental relationship between their packing colorability: if \( G \) is \( (s_1, s_2, \ldots, s_k) \)-packing colorable, then \( S(G) \) is \( (1, 2s_1 + 1, 2s_2 + 1, \ldots, 2s_k + 1) \)-packing colorable. They further observed that when \( G \) is \( (1^2,2^2) \)-packing colorable  ($p^r$ in a sequence means $p$ is repeated $r$
times), this immediately implies \( \chi_p(S(G)) \leq 5 \). Consequently, to verify Conjecture~\ref{conj1} for any subclass of subcubic graphs, it suffices to demonstrate \( (1^2,2^2) \)-packing colorability within that subclass. 

Toward this goal, Liu, Zhang, and Zhang \cite{r2} proved that every subcubic graph \( G \) is \( (1^2,2^2,3) \)-packing colorable, thus establishing \( \chi_p(S(G)) \leq 6 \). This result is just one parameter shy of the conjectured bound and significantly improved an earlier result by Balogh, Kostochka, and Liu \cite{BKL2}, who showed that the packing chromatic number of the 1-subdivision of subcubic graphs is bounded by 8.

Several studies have investigated diverse packing \( S \)-colorings of subcubic graphs \cite{zbMATH07240148, zbMATH07352780, zbMATH07677301, zbMATH07215509, zbMATH07939373, zbMATH07983135, zbMATH07051059, zbMATH06007892, zbMATH06689498, zbMATH07377172}. In particular, Gastineau and Togni \cite{r6} demonstrated that every subcubic graph admits both \((1^2, 2^3)\)-packing colorings and \((1, 2^6)\)-packing colorings. Balogh, Kostochka, and Liu \cite{BKL2} further showed that any subcubic graph has a $(1^2,2^2,3^2,k)$-packing coloring where color $k \geq 4$ is used at most once, and that every 2-degenerate subcubic graph is $(1^2,2^2,3^2)$-packing colorable. 

For cubic graphs specifically, Cranston and Kim \cite{CK1} proved that all cubic graphs except the Petersen graph admit $(2^8)$-packing colorings. This result was complemented by Thomassen \cite{T1} and independently by Hartke, Jahanbekam, and Thomas \cite{HJT1}, who established that cubic planar graphs are $(2^7)$-packing colorable.

To systematically refine these results, researchers have focused on \emph{$i$-saturated subcubic graphs}, a family defined by structural constraints on high-degree vertices. A \textit{$k$-degree graph} is a graph $G$ satisfying $ \Delta(G) \leq k$, and an \textit{$i$-saturated subcubic graph} is one where each vertex of degree 3 is adjacent to at most $i$ other degree-3 vertices. This classification has yielded significant progress: Yang and Wu \cite{zbMATH07684692} proved that all 0-saturated subcubic graphs admit $ (1^2,3) $-packing colorings; Bazzal \cite{arXiv:2409.01769} extended this to 1-saturated graphs with $(1^2,2) $-colorings; Mortada and Togni \cite{zbMATH07950632} further generalized to 2-saturated graphs with $ (1^2,2,3) $-colorings; and their subsequent work \citep{zbMATH07825852} established the $(1^2, 2^2)$-colorability for $(3,0)$-saturated variants (a refined subclass where $(3,0)$-saturated denotes 3-saturated graphs with non-adjacent heavy vertices, defined as degree-3 vertices whose neighbors are all degree-3 vertices).


Despite the limited exploration of $S$-packing coloring in graphs with larger maximum degrees, Mortada and Togni \cite{arXiv:2503.18793} recently advanced this field by analyzing $k$-degree graphs under varying saturation constraints, where a $k$-degree graph $G$ is \emph{$t$-saturated} if every vertex of degree $k$ has at most $t$ neighbors of degree $k$. Their results are as follows: for $0$-saturated $k$-degree graphs, $G$ admits a $(1^{k-1}, 3)$-packing coloring; when saturation increases to $1 \leq t \leq k-2$, it transitions to $(1^{k-1}, 2)$-packing colorability. Further, $(k-1)$-saturated $k$-degree graphs ($k \geq 4$) require a $(1^{k-1}, 2^{k-1})$-packing coloring, while all $k$-degree graphs ($k \geq 3$) universally satisfy $(1^{k-1}, 2^{k})$-packing colorability. These findings not only generalize prior work on subcubic graphs but also offer a unified framework for analyzing degree-constrained graph colorability.

A \textit{Halin graph} is a plane graph constructed from a tree $T$ (called \textit{characteristic tree}) without vertices of degree two by connecting all leaves through a cycle $C$ (called \textit{adjoint cycle}). Halin graphs exhibit distinct $S$-packing colorability patterns. Tarhini and Togni \cite{r1} proved cubic Halin graphs are $(1^2,2,3)$-packing colorable, while Wang, Hu, and Wang \cite{r3} showed 5-degree Halin graphs require $(2^6)$-packing colorings due to their square graphs' chromatic number 6. We strengthen this bound in this paper, proving that 5-degree Halin graphs actually admit $(1^2,2^3)$-packing colorings, improving the state-of-the-art for this graph class. It is worth noting that the wheel graph $W_5$ (with maximum degree $5$) is a Halin graph that  does not admit a $(1,2^3)$-packing coloring.

\begin{theorem}\label{thm1}
Halin graphs with maximum degree at most 5 are $(1^2,2^3)$-packing colorable.
\end{theorem}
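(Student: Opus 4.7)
The plan is to proceed by structural induction on the characteristic tree $T$ of the Halin graph $H = T \cup C$, processing $T$ from the leaves upward; this bottom-up traversal will naturally yield a linear-time algorithm. For the base case, I would handle Halin graphs whose characteristic tree has a single internal vertex, namely the wheels $W_n$ with $n \in \{3, 4, 5\}$ (the upper bound coming from $\Delta(H) \leq 5$, the lower from the absence of degree-$2$ vertices in $T$), giving for each an explicit $(1^2, 2^3)$-packing coloring. For instance in $W_5$, one assigns color $1$ to the hub, color $2$ to two non-adjacent rim vertices, and colors $3, 4, 5$ each once on the three remaining rim vertices; this trivially satisfies every distance-$3$ constraint since each 2-packing color appears on a single vertex.

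For the inductive step, I would isolate an \emph{innermost fan}: an internal vertex $v$ of $T$ all of whose neighbors in $T$, except its parent $u$, are leaves $\ell_1, \ldots, \ell_k$. The constraints $3 \leq \deg_T(v) \leq 5$ force $k \in \{2, 3, 4\}$. I would reduce $H$ to a smaller Halin graph $H'$ by deleting $\ell_1, \ldots, \ell_k$, and where needed $v$ itself, while modifying $T$ and $C$ so that the resulting tree still has no degree-$2$ vertex; in awkward configurations (for instance, when $u$ has only one other child in $T$), the reduction may have to be combined with that of a neighboring fan so that the Halin property is preserved. Applying the inductive hypothesis to $H'$ produces a $(1^2, 2^3)$-packing coloring of the smaller graph, which I would then extend to the removed vertices.

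The hard part will be showing that this extension is always possible. Assigning a 2-packing color $c \in \{3, 4, 5\}$ to a fan vertex forbids $c$ on every vertex within distance $2$ in $H$; the relevant forbidden set consists of $v$, the leaves $\ell_1, \ldots, \ell_k$, the parent $u$, and a bounded number of vertices reachable through $u$ or through the two external cycle-neighbors of $\ell_1$ and $\ell_k$. Because $k \leq 4$ and only five colors are available, I would perform a case analysis on $k$ and on the boundary color pattern, meaning the colors of $u$, of the two external cycle-neighbors, and of their own neighbors. When a boundary pattern obstructs extension, I would show that the inductive coloring of $H'$ can be locally modified, either by swapping the independent-set colors $1 \leftrightarrow 2$ on a bipartite region near the fan, or by permuting the 2-packing colors on the isolated occurrences around $u$, so as to produce a compatible pattern. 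The principal technical burden is the exhaustive verification that at least one compatible completion exists for every fan size and every legal boundary pattern, and that is where the bulk of the case analysis will lie; the novelty compared to the cubic Halin result of Tarhini and Togni is precisely the need to accommodate vertices of degree $4$ and $5$, which raises the number of fan configurations to be checked.
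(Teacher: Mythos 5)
Your route is genuinely different from the paper's, which uses no induction at all: there, the characteristic tree $T$ is properly $2$-colored with $1,1'$, the adjoint cycle is overwritten by the periodic pattern $1\,2_a\,1\,2_b\cdots$ (with endgame adjustments according to $n \bmod 4$), and a single conflict-resolving pass introduces the third $2$-class $2_c$ at the few places where two cycle vertices sharing a tree neighbour received the same colour, the correctness resting on the fact that $\Delta\leq 5$ bounds how many cycle vertices of $2$-type can attach to one tree vertex. Your skeleton is instead the Tarhini--Togni fan-contraction scheme, and its set-up is sound as far as it goes: an innermost fan always exists; deleting $\ell_1,\dots,\ell_k$ turns $v$ into a leaf without changing the degree of any other tree vertex, so the reduced graph is automatically a Halin graph and your worry about combining with a neighbouring fan is unnecessary; and since every path of $H'$ lifts to a walk of $H$ of at least the same length, $d_{H'}(a,b)\leq d_H(a,b)$ for all surviving vertices, so the inherited colouring stays valid in $H$ and the extension problem is well posed (this also restricts the boundary patterns you must face; e.g.\ $x$ and $y$ are at distance $2$ in $H'$ through $v$, hence can never carry the same $2$-class).

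The genuine gap is that the extension lemma --- which is the entire content of the theorem --- is only promised, never proved, and the repair operations you lean on when a boundary pattern blocks extension are not sound as described. The inductive hypothesis gives you an \emph{arbitrary} $(1^2,2^3)$-packing colouring of $H'$ with no control near the reinsertion site: with $k=4$, a parent coloured $2_c$, and adversarial $2$-classes on $x$, $y$ and their neighbours, every direct completion attempt (placing $v$ in a $1$-class, or recolouring $v$ to the one $2$-class not blocked through $u$, $x$, $y$) can be driven into a corner, so everything hinges on the repairs. But swapping $1\leftrightarrow 1'$ ``on a bipartite region near the fan'' is legitimate only on a full Kempe component of the subgraph induced by the two $1$-classes, and you have no bound on that component --- in colourings like the paper's, where the entire tree is $1/1'$-coloured, it is the whole graph, and a global swap is a mere renaming that cannot break a symmetric obstruction. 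Likewise, permuting a $2$-class on a single occurrence near $u$ is legitimate only if no vertex of the target class lies within distance $2$ of it, a freedom the hypothesis does not grant. The standard remedy, and what the cubic Halin proof of Tarhini and Togni actually does, is to prove a \emph{strengthened} inductive statement that prescribes the colouring pattern along the cycle near every vertex, so that only pre-approved boundary patterns can occur; you neither formulate such an invariant nor carry out the exhaustive verification you yourself identify as the principal burden, so the heart of the proof is missing.
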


The proof of Theorem~\ref{thm1} is algorithmic: it provides an explicit procedure to construct the desired $(1^2,2^3)$-packing coloring in $O(n)$ time, where $n$ denotes the number of vertices in the Halin graph. The algorithm is introduced in Section~\ref{sec:2} and verified in Section~\ref{sec:3}. In Section~\ref{sec:4}, we compile a list of open problems concerning $S$-packing colorings of Halin graphs.

\section{A quick algorithm} \label{sec:2}

Let $G$ be a Halin graph with maximum degree at most 5, where 
$T$ is the characteristic tree and $C:=a_1a_2\cdots a_na_1$ is the adjoint cycle of $G$.
The set of colors that we are going to use is $\{1,1',2_a,2_b,2_c\}$.  

First, we properly color the tree $T$ using two colors $1$ and $1'$, and denote this coloring by $\varphi$. Next, we remove all colors from $C$ and recolor the vertices $a_1, a_2, \ldots, a_n$ according to the \hyperref[alg-recolor]{\texttt{RECOLORING}} algorithm.
In this algorithm, we introduce the boolean variable $\texttt{all\_same}$ (initialized to $\mathsf{True}$) to distinguish between two cases. If either $\varphi(a_1) \neq \varphi(a_n)$ initially holds or there exists an index $2\leq i \leq n$ such that $\varphi(a_i) \neq \varphi(a_{i-1})$, we perform a \emph{cyclic relabeling} of the vertices $a_1, a_2, \ldots, a_n$ to ensure $\varphi(a_1) \neq \varphi(a_n)$, reset $\texttt{all\_same} $ to be $\mathsf{False}$, and apply \hyperref[alg-case-1]{\texttt{CASE-1}}.
Otherwise (i.e., when $\varphi(a_i) = 1$ for all $i \in [n]$), the value of $\texttt{all\_same}$ remains True and we directly apply \hyperref[alg-case-2]{\texttt{CASE-2}}.
Note that each $a_i$ is a leaf in $T$, and let $b_i$ be its unique neighbor in $T$.
For any $i$, denote by $1_i$ the color in $\{1, 1'\} \setminus \varphi(b_i)$, and let $\mathcal{N}[b_i]$ denote the set of vertices $a_j$ adjacent to $b_i$.

\begin{lem}
\label{claim-1}
    For each $b_i$ with $i\in [n]$, $\mathcal{N}[b_i]$ contains at most three vertices.
\end{lem}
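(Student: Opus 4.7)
The plan is to use the degree bound $\Delta(G) \le 5$ together with the Halin structure of $T$. Since $a_i$ is a leaf of $T$ and $T$ contains no degree-$2$ vertex, its unique $T$-neighbor $b_i$ is an internal vertex of $T$ of degree at least $3$. Being internal, $b_i$ does not lie on the adjoint cycle $C$, so every edge at $b_i$ in $G$ is an edge of $T$; hence $\deg_G(b_i)=\deg_T(b_i)\le 5$.

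By the given definition, $\mathcal{N}[b_i]$ consists of all leaves of $T$ adjacent to $b_i$, that is, the leaf-children of $b_i$. Write $k = |\mathcal{N}[b_i]|$ and let $m$ denote the number of \emph{internal} $T$-neighbors of $b_i$; then $k + m = \deg_T(b_i) \le 5$. Consequently, the target bound $k \le 3$ is equivalent to the inequality $m \ge 2$, and the whole proof reduces to establishing this lower bound on $m$.

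The main task, therefore, is to show that every internal vertex of $T$ adjacent to a leaf has at least two internal neighbors in $T$. I expect this to be the most delicate step of the proof, since a priori nothing prevents an internal vertex from being pendant in the reduced subtree obtained by deleting all leaves of $T$. The plan is to rule out the configurations $m = 0$ and $m = 1$ by a short case analysis: in the $m=0$ case $b_i$ is the unique internal vertex of $T$, forcing $T$ to be a star and $G$ to be a wheel, which must then be handled as a boundary case; and the $m=1$ case is addressed by exploiting the absence of degree-$2$ vertices in $T$ together with the cyclic arrangement of leaves forced by the planar Halin embedding. Once $m \ge 2$ is secured, the conclusion $k \le 5 - m \le 3$ is immediate.
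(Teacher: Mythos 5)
There is a genuine gap, and it starts with the interpretation of $\mathcal{N}[b_i]$. You read $\mathcal{N}[b_i]$ as the set of \emph{all} leaves of $T$ adjacent to $b_i$, but in this paper $\mathcal{N}[b_i]$ consists only of those cycle vertices adjacent to $b_i$ whose color lies in $\{2_a,2_b,2_c\}$ after \texttt{RECOLORING} (this is exactly how the dictionary $\mathcal{N}$ is built in \texttt{CONFLICTS-RESOLVING}, which skips every pair $(a,b)$ with $\varphi(a)\in\{1,1'\}$, and the paper's own proof opens by assuming $\{\varphi(a_i),\varphi(a_j),\varphi(a_k),\varphi(a_\ell)\}\subseteq\{2_a,2_b,2_c\}$). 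Under your literal reading the lemma is simply false: in the wheel $W_5$ --- explicitly mentioned in the paper as a Halin graph with $\Delta=5$ --- the hub is the $T$-neighbor $b_i$ of every leaf and has five leaf neighbors, and a ``fan'' vertex with four leaf children and one internal neighbor (degree $5$) has four. Correspondingly, your key reduction ``$k\le 3$ iff $m\ge 2$'' cannot be carried out, because $m\ge 2$ is false: $m=0$ for the star/wheel and $m=1$ for fan vertices are both legitimate configurations in Halin graphs with $\Delta\le 5$ (e.g., a spider whose center has three internal neighbors, each with three leaf children). The ``boundary cases'' you plan to handle by case analysis are not boundary cases but outright counterexamples to the statement you set out to prove, so no case analysis can close them; moreover, the proposal never actually supplies the argument for its central step, only a plan.

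The lemma is a statement about the algorithm's coloring, not a purely structural fact about Halin graphs, and the coloring is what the paper's proof leans on: by Lemma~\ref{coloring} the color sequence places $2$-type colors essentially on alternating positions, with at most one pair of consecutive cycle vertices both $2$-colored; hence if four $2$-colored vertices $a_i,a_j,a_k,a_\ell$ were all adjacent to $b_i$, at least two of the three gaps between them would contain further leaves $a_{i'},a_{j'}$, and (by planarity of the Halin embedding) the tree paths from $b_i$ to $a_{i'}$ and $a_{j'}$ force two additional edges at $b_i$, giving $\deg(b_i)\ge 6>5$. Your proposal uses neither the coloring nor this planarity argument, so the missing ideas are precisely the two that make the lemma true.
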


\begin{proof}
Suppose, for the sake of contradiction, that $\{a_i, a_j, a_k, a_\ell\} \subseteq \mathcal{N}[b_i]$, where $i < j < k < \ell$. Given that $\{\varphi(a_i), \varphi(a_j), \varphi(a_k), \varphi(a_\ell)\} \subseteq \{2_a, 2_b, 2_c\}$, by the \hyperref[alg-recolor]{\texttt{RECOLORING}} algorithm, at most one of the differences $j-i$, $k-j$, and $\ell-k$ can be equal to 1. Without loss of generality, assume that $j-i \neq 1$ and $k-j \neq 1$. This implies the existence of vertices $a_{i'}$ and $a_{j'}$ such that $i < i' < j < j' < k$.
Since $a_{i'}$ and $a_{j'}$ are leaves of the tree $T$, there must exist paths from $b_i$ to $a_{i'}$ and from $b_i$ to $a_{j'}$ within $T$. Consequently, $b_i$ must be incident with at least six edges, which contradicts the condition $\Delta(G) \leq 5$.    
\end{proof}

\begin{algorithm}[H]\label{alg-recolor}
    \caption{\textbf{RECOLORING($\varphi(a_1),\varphi(a_2),\cdots,\varphi(a_n)$)}}
    
    \LinesNumbered
    \DontPrintSemicolon
    
    \KwIn{Colors $\varphi(a_1),\varphi(a_2),\cdots,\varphi(a_n)$ of vertices $a_1,a_2,\cdots,a_n$}
    \KwOut{Colors of $a_1,a_2,\cdots,a_n$ after recoloring}

    $all\_same \gets \mathsf{True}$ \\
    
 \If{$\varphi(a_1) = \varphi(a_n)$}
{
    \For{$i \gets 2$ \textbf{to} $n$ \label{algo:recolor-line3}}
    {
        \If{$\varphi(a_i) \neq \varphi(a_{i-1})$}
        {
            \For{$j \gets 0$ \textbf{to} $n-1$}
            {
                $a_{j+1} \gets a_{(i+j-1 \mod n)+1}$ \label{algo:recolor-line6}
            }
            $all\_same \gets \mathsf{False}$ \\
            \textbf{break} 
        }
    }
}
\Else
{
    $all\_same \gets \mathsf{False}$
}

\If{$all\_same=\mathsf{False}$}{
\textbf{CASE-1}($a_1,a_2,\cdots,a_n,\varphi(a_1),\varphi(a_2),\cdots,\varphi(a_n)$)\\
        }
    \Else
    {
        \textbf{CASE-2}($a_1,a_2,\cdots,a_n,\varphi(a_1),\varphi(a_2),\cdots,\varphi(a_n)$)

    }
    
\end{algorithm}

\begin{algorithm}[H]\label{alg-case-1}
    \caption{\textbf{CASE-1($a_1,a_2,\cdots,a_n,\varphi(a_1),\varphi(a_2),\cdots,\varphi(a_n)$)}}
    
    \LinesNumbered
    \DontPrintSemicolon
    
  \KwIn{Colors $\varphi(a_1),\varphi(a_2),\cdots,\varphi(a_n)$ of vertices $a_1,a_2,\cdots,a_n$}
    \KwOut{Colors of $a_1,a_2,\cdots,a_n$ after recoloring}
    
$m \gets \lfloor n / 4 \rfloor * 4$ \tcp*{Ensure m is the largest multiple of 4 less than or equal to n}

\For{$k \gets 1$ \textbf{to} $m$}{
            \uIf{$k \equiv 1 \pmod{2}$}{
                $\varphi(a_k) \gets 1_k$ \tcp*{Color $a_k$ from $\{1,1'\}$ avoiding the color on $b_k$}
            }
            \uElseIf{$k \equiv 2 \pmod{4}$}{
                $\varphi(a_k) \gets 2_a$
            }
            \Else{
                $\varphi(a_k) \gets 2_b$ \tcp*{Covers $n \equiv 0 \pmod{4}$ case}
            }
        }

\Switch{$(n - m)$}{
\Case{1}{
    $\varphi(a_n) \gets 1_n$ \tcp*{Covers $n \equiv 1 \pmod{4}$ case}
}
\Case{2}{
    $\varphi(a_{n-1}) \gets 2_a$; $\varphi(a_n) \gets 1_n$ \tcp*{Covers $n \equiv 2 \pmod{4}$ case}
}
\Case{3}{
    $\varphi(a_{n-2}) \gets 1_{n-2}$; $\varphi(a_{n-1}) \gets 2_a$; $\varphi(a_n) \gets 1_n$ \tcp*{Covers $n \equiv 3 \pmod{4}$ case}
}
}
\end{algorithm}

\begin{algorithm}[H]\label{alg-case-2}
    \caption{\textbf{CASE-2($a_1,a_2,\cdots,a_n,\varphi(a_1),\varphi(a_2),\cdots,\varphi(a_n)$)}}
    
    \LinesNumbered
    \DontPrintSemicolon
    
  \KwIn{Colors $\varphi(a_1),\varphi(a_2),\cdots,\varphi(a_n)$ of vertices $a_1,a_2,\cdots,a_n$}
    \KwOut{Colors of $a_1,a_2,\cdots,a_n$ after recoloring}
    
$m \gets \lfloor n / 4 \rfloor * 4$ \tcp*{Ensure m is the largest multiple of 4 less than or equal to n}

\For{$k \gets 1$ \textbf{to} $m$}{
            \uIf{$k \equiv 1 \pmod{2}$}{
                $\varphi(a_k) \gets 1_k$ \tcp*{Color $a_k$ from $\{1,1'\}$ avoiding the color on $b_k$}
            }
            \uElseIf{$k \equiv 2 \pmod{4}$}{
                $\varphi(a_k) \gets 2_a$
            }
            \Else{
                $\varphi(a_k) \gets 2_b$ \tcp*{Covers $n \equiv 0 \pmod{4}$ case}
            }
        }

\Switch{$(n - m)$}{
\Case{1}{
    $\varphi(a_n) \gets 2_c$ \tcp*{Covers $n \equiv 1 \pmod{4}$ case}
}
\Case{2}{
    $\varphi(a_{n-1}) \gets 1_{n-1}$; $\varphi(a_n) \gets 2_c$ \tcp*{Covers $n \equiv 2 \pmod{4}$ case}
}
\Case{3}{
    $\varphi(a_{n-2}) \gets 2_{a}$; $\varphi(a_{n-1}) \gets 1_{n-1}$; $\varphi(a_n) \gets 2_b$ \tcp*{Covers $n \equiv 3 \pmod{4}$ case}
}
}

        \tcp*{Check for specific condition and modify colors accordingly}
        \For{$i \gets 1$ \textbf{to} $n-2$  \label{algo:recolor-line12}}{
        \If{$\varphi(a_n) = 2_c ~ \textbf{and}~ \varphi(a_{n-1}) = \varphi(a_i) = 2_b ~ \textbf{and}~  b_n=b_{n-1}=b_i$ \label{algo:recolor-line13}}{
            $\varphi(a_2) \gets 2_c$; $\varphi(a_{n-1}) \gets 2_c$; $\varphi(a_n) \gets 2_a$\\
            \textbf{break}  \label{algo:recolor-line15}
        }
        }
        $not\_separated \gets \mathsf{True}$ \label{algo:recolor-line16}\\
        \If{$\varphi(a_n)=2_c$}{
            \For{$i \gets 1$ \textbf{to} $n-1$  }{
            \If{$\varphi(a_i) \in \{ 2_a,2_b\}
             ~ \textbf{and}~   b_n=b_i$ }{
                $not\_separated\gets \mathsf{False}$\\
                \textbf{break}
                }
            }
            \If{$not\_separated$}{
            \For{$i \gets 4$ \textbf{to} $n-1$  }{
            \If{$\varphi(a_i) = 2_a  ~ \textbf{and}~    b_i=b_2$ }{
                $\varphi (a_2) \gets 2_c; \varphi (a_n) \gets 2_a$
                
                \textbf{break} \label{algo:recolor-line26}
                }
            }
            }
        }

\end{algorithm}

For a sequence of $n$ vertices $a_1, a_2, \ldots, a_n$, its \textit{color sequence} is a string $\ell_1 \ell_2 \cdots \ell_n \in \Sigma^n$, where $\Sigma \subseteq \mathbb{N}$ is a finite set of colors, and $\ell_j$ denotes the color of vertex $a_j$.
In this paper, if the index $i$ appears in a color sequence as $\ell_j$ (i.e., $\ell_j = i$), its interpretation is determined by the color of vertex $b_j$: $i$ represents the color $1$ if $b_j$ has color $1'$, and otherwise represents the color $1'$.

The following lemma is a direct consequence of Algorithms \hyperref[alg-case-1]{\texttt{CASE-1}} and \hyperref[alg-case-2]{\texttt{CASE-2}}, and is presented here without proof. 
It is worth noting that \hyperref[alg-case-2]{\texttt{CASE-2}} first generates a color sequence for vertices $a_1, a_2, \ldots, a_n$, following which certain vertices are recolored under distinct conditions. 

\begin{itemize}
    \item If there exists a unique vertex $b_i \in V(T)$ (guaranteed by Lemma \ref{claim-1}) with $\mathcal{N}[b_i] = \{a_i, a_{n-1}, a_n\}$ and the color sequence of $a_i,a_{n-1},a_n$ is $2_b2_b2_c$, we perform the following recoloring in lines \ref{algo:recolor-line12}--\ref{algo:recolor-line15} of \hyperref[alg-case-2]{\texttt{CASE-2}}: reassign $2_c$ to both $a_i$ and $a_{n-1}$, and $2_a$ to $a_n$ (see Figure \ref{fig:obs}(c) for illustration).
    \item If $\mathcal{N}[b_n]$ contains no vertices colored with $2_a$ or $2_b$, and $b_2$ is adjacent to some $a_j \neq a_2$ colored $2_a$, then we recolor $a_2$ to $2_c$ and $a_n$ to $2_a$ (see Figures \ref{fig:obs}(d) or \ref{fig:obs}(g) for examples). 
    In the algorithm, we introduce a Boolean variable $\texttt{not\_separated}$ (initialized to $\mathsf{True}$) to determine whether $\mathcal{N}[b_n]$ contains no vertices colored $2_a$ or $2_b$: if $\texttt{not\_separated}$ becomes $\mathsf{False}$, the algorithm terminates without action; otherwise, it checks if $b_2$ is adjacent to some $a_j \neq a_2$ colored $2_a$, and if this condition holds, performs the recoloring operation (see lines \ref{algo:recolor-line16}--\ref{algo:recolor-line26} of \hyperref[alg-case-2]{\texttt{CASE-2}} for implementation details).
\end{itemize}

\begin{lem}
    \label{coloring}
If the \hyperref[alg-case-1]{\texttt{CASE-1}} algorithm is applied, the colors assigned to $a_1, a_2, \ldots, a_n$ are as follows:
\begin{description}[font=\normalfont\itshape, style=unboxed, leftmargin=*, itemsep=0.5ex]
    \item[when $n \equiv 0 \pmod{4}$:] the color sequence is $i2_a i2_b \cdots i2_a i2_b$.
    \item[when $n \equiv 1 \pmod{4}$:] the color sequence is $i2_a i2_b \cdots i2_a i2_b i$.
    \item[when $n \equiv 2 \pmod{4}$:] the color sequence is $i2_a i2_b \cdots i2_a i2_b 2_a i$.
    \item[when $n \equiv 3 \pmod{4}$:] the color sequence is $i2_a i2_b \cdots i2_a i2_b i2_a i$.
\end{description}

If the \hyperref[alg-case-2]{\texttt{CASE-2}} algorithm is applied, the colors are determined by more complex conditions:
\begin{description}[font=\normalfont\itshape, style=unboxed, leftmargin=*, itemsep=0.5ex]
    \item[when $n \equiv 0 \pmod{4}$:] the color sequence is $i2_a i2_b \cdots i2_a i2_b$.
    
    \item[when $n \equiv 1 \pmod{4}$:] the color sequence is one of the following, depending on the graph structure:
    \begin{itemize}[leftmargin=1.5em, itemsep=0.3ex]
        \item $i2_a i2_b \cdots i2_a i2_b 2_c$, \\
        which occurs if and only if 
        \begin{itemize}
            \item either $\mathcal{N}[b_n]$ has a vertex $a_j$ colored with $2_a$ or $2_b$, and when $j=n-1$, no other vertex in $\mathcal{N}[b_n]$ is colored with $2_b$ (see Figure~\ref{fig:obs}(a)), 
            \item or $b_2$ is not adjacent to an $a_j \neq a_2$ colored with $2_a$ (see Figure~\ref{fig:obs}(b));
        \end{itemize}

        \item $i2_c i2_b i2_a i2_b \cdots i2_a i2_b i2_a i2_c 2_a$, \\
        which occurs if and only if $b_n$ is adjacent to both $a_{n-1}$ and $a_n$, and also to another vertex $a_j$ colored with $2_b$ (see Figure~\ref{fig:obs}(c));

        \item $i2_c i2_b i2_a i2_b \cdots i2_a i2_b 2_a$, \\
     which occurs if and only if $\mathcal{N}[b_n]$ has no vertex colored with $2_a$ or $2_b$, and $b_2$ is adjacent to an $a_j \neq a_2$ colored with $2_a$ (see Figure~\ref{fig:obs}(d)); 
    \end{itemize}
    
    \item[when $n \equiv 2 \pmod{4}$:] the color sequence is one of the following:
    \begin{itemize}[leftmargin=1.5em, itemsep=0.3ex]
        \item $i2_a i2_b \cdots i2_a i2_b i2_c$, \\
   with conditions identical to the first case of $n \equiv 1 \pmod{4}$ (see Figure~\ref{fig:obs}(e) and (f));
    
        \item $i2_c i2_b i2_a i2_b \cdots i2_a i2_b i2_a$, \\
with conditions identical to the third case of $n \equiv 1 \pmod{4}$ (see Figure~\ref{fig:obs}(g)).
    \end{itemize}
    
    \item[when $n \equiv 3 \pmod{4}$:] the color sequence is $i2_a i2_b \cdots i2_a i2_b 2_a i2_b$. \hfill$\square$
\end{description}
\end{lem}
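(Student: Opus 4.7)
The plan is to prove Lemma~\ref{coloring} by a direct trace of both algorithms, treating each congruence class of $n$ modulo $4$ and each post-processing branch separately. The backbone of the argument is the observation that the common main for-loop (lines 2--11 of both \hyperref[alg-case-1]{\texttt{CASE-1}} and \hyperref[alg-case-2]{\texttt{CASE-2}}) assigns, for $k$ from $1$ to $m := \lfloor n/4 \rfloor \cdot 4$, the color $1_k$ when $k$ is odd, $2_a$ when $k \equiv 2 \pmod{4}$, and $2_b$ when $k \equiv 0 \pmod{4}$. This produces the prefix $i\,2_a\,i\,2_b\,\cdots\,i\,2_a\,i\,2_b$ of length $m$, where each occurrence of $i$ is the convention introduced above.

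For \hyperref[alg-case-1]{\texttt{CASE-1}}, the remainder of the argument is immediate: the switch statement on $(n-m) \in \{0,1,2,3\}$ appends exactly the suffix written in the algorithm, and concatenating with the common prefix yields the four sequences claimed. For \hyperref[alg-case-2]{\texttt{CASE-2}}, the corresponding switch differs by placing $2_c$ at position $n$ and by using a cyclic shift at position $n-2$ when $n \equiv 3$. Reading off each branch produces an ``initial'' color sequence coinciding with the first listed possibility in each case. For $n \equiv 0$ and $n \equiv 3$ the guard $\varphi(a_n)=2_c$ of the post-processing fails, so both blocks are bypassed and the initial sequence is final, delivering those two parts of the lemma without further work.

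The hard part is the sub-case analysis for $n \equiv 1$ and $n \equiv 2$, where the final sequence depends on the local structure of $T$ around $b_n$ and $b_2$. I would analyze the two post-processing blocks in turn. The first block (lines 12--15) activates exactly when some $i \le n-2$ satisfies $b_n = b_{n-1} = b_i$ and $\varphi(a_i) = \varphi(a_{n-1}) = 2_b$; Lemma~\ref{claim-1} guarantees such an $i$ is unique, and substituting $(\varphi(a_2),\varphi(a_{n-1}),\varphi(a_n)) \leftarrow (2_c,2_c,2_a)$ into the initial sequence produces precisely the sequence beginning $i\,2_c\,i\,2_b\,i\,2_a\,\ldots$ listed in the lemma. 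The second block (lines 16--26) executes only when the first was skipped, so the guard forces $\mathcal{N}[b_n]$ to contain no vertex colored $2_a$ or $2_b$; another application of Lemma~\ref{claim-1} guarantees at most one eligible $a_j \in \mathcal{N}[b_2]$ with $\varphi(a_j)=2_a$, and the recoloring $(\varphi(a_2),\varphi(a_n)) \leftarrow (2_c,2_a)$ is therefore unambiguous and yields the remaining modified sequence in each case.

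Finally, the first listed sequence for $n \equiv 1$ and $n \equiv 2$ is obtained as the logical complement of the two trigger conditions above. Translating this complement into the structural description given in the lemma requires a brief case split: I would check that when $\mathcal{N}[b_n]$ does contain a vertex $a_j$ colored $2_a$ or $2_b$ with the stipulated refinement for $j=n-1$, the guard $b_n = b_{n-1} = b_i$ with $\varphi(a_i)=\varphi(a_{n-1})=2_b$ cannot be satisfied for any $i \le n-2$, while the guard of the second block automatically fails because $\texttt{not\_separated}$ is set to $\mathsf{False}$. This matching between the Boolean negations in the code and the stated structural descriptions is the only bookkeeping step requiring care, and once verified it completes the proof.
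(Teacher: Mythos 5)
Your proposal is correct and coincides with the paper's own treatment: the paper deliberately states this lemma \emph{without proof}, declaring it ``a direct consequence'' of \texttt{CASE-1} and \texttt{CASE-2} and supplying, in the two bullet points just before the lemma, exactly the description of the two post-processing triggers (the $b_n=b_{n-1}=b_i$ block with the $2_b2_b2_c$ pattern, and the \texttt{not\_separated} block) that your trace analyzes. Your direct verification --- common prefix from the shared for-loop, the switch suffixes per residue of $n$ modulo $4$, the observation that both guarded blocks are bypassed when $\varphi(a_n)\neq 2_c$, and the matching of the lemma's ``if and only if'' conditions to the Boolean complements of the two trigger conditions --- is precisely the routine bookkeeping the authors left to the reader, so there is nothing to reconcile.
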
 

     \begin{figure}
        \centering
        \scalebox{1.0}{
            \input{observation}
        }
        \caption{The (re)coloring procedure involved in Algorithm \hyperref[alg-case-2]{\texttt{CASE-2}} }
        \label{fig:obs}
    \end{figure}

    \begin{lem} \label{claim-2}
    For each $b_i$ with $i\in [n]$, if $\mathcal{N}[b_i]$ contains exactly three vertices, then at least two of them are neighbor to each other on $C$, and they are not in the same color.
\end{lem}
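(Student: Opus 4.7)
The plan is to establish the two conclusions of the lemma separately.

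For the adjacency claim, I would argue by planarity combined with the degree bound. Write $\mathcal{N}[b_i]=\{a_p,a_q,a_r\}$ with $p<q<r$; these three leaves partition the cycle $C$ into three arcs. In the planar embedding of the Halin graph, every leaf lying strictly inside one of these arcs must belong to the subtree rooted at some non-leaf neighbor of $b_i$ in $T$ that occupies a position between the corresponding two endpoint-leaves in the rotational order around $b_i$. Since $\Delta(G)\le 5$ and $b_i$ already has the three leaf neighbors $a_p,a_q,a_r$ in $T$, it has at most two non-leaf neighbors, so at most two of the three arcs can contain intermediate leaves. At least one arc is therefore empty, i.e., two of $a_p,a_q,a_r$ are consecutive on $C$.

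For the color-difference claim, let $a_j,a_{j+1}$ (indices modulo $n$) denote such an adjacent pair inside $\mathcal{N}[b_i]$, and split on which sub-algorithm is executed. If \textbf{CASE-1} runs, the rotation step ensures $\varphi(a_1)\neq\varphi(a_n)$ in the initial proper $2$-coloring $\varphi$ of $T$. The pair cannot be the wrap-around $(a_n,a_1)$: otherwise $b_1=b_n=b_i$ would force $\varphi(a_1)=\varphi(a_n)$, a contradiction. So $1\le j\le n-1$, and a direct reading of the four CASE-1 sequences in Lemma~\ref{coloring} confirms that every such consecutive pair receives two distinct colors (the only non-alternating pair is $(2_b,2_a)$ in the $n\equiv 2\pmod 4$ tail, which is still a pair of distinct colors).

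If \textbf{CASE-2} runs, I would inspect each of the seven sequences listed in Lemma~\ref{coloring}. In every sequence, every pair of consecutive positions on $C$ receives two distinct colors; in particular the wrap-around pair $(a_n,a_1)$ evaluates to one of $(2_a,i)$, $(2_b,i)$, or $(2_c,i)$ across the subcases, each mixing the two color types $\{1,1'\}$ and $\{2_a,2_b,2_c\}$. The hard part is purely the bookkeeping in CASE-2: the switch statement produces three distinct tails and the two recoloring blocks (lines~\ref{algo:recolor-line12}--\ref{algo:recolor-line15} and \ref{algo:recolor-line16}--\ref{algo:recolor-line26}) each modify three vertices, so one must traverse all seven explicit sequences and check the few pairs at the boundary between the uniform interior and the adjusted tail. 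Since Lemma~\ref{coloring} has already enumerated the sequences, the verification reduces to a short finite check.
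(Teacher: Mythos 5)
Your proposal is correct in substance but takes a partly different route from the paper's. For the adjacency claim, the paper argues by contradiction exactly as in Lemma~\ref{claim-1}: if neither gap between consecutive members of $\mathcal{N}[b_i]$ were empty, then tree paths from $b_i$ to intermediate leaves (together with the wrap-around arc, which always contains the $1$-colored vertex $a_1$) would force $\deg(b_i)\geq 6$; your pigeonhole over the three arcs is the same planarity idea in positive form. For the color claim, however, the paper does not check all consecutive pairs: using that all three members of $\mathcal{N}[b_i]$ carry $2$-type colors, it locates the only positions in the sequences of Lemma~\ref{coloring} where two $2$-colored vertices can be cycle-adjacent (the special tails) and enumerates the possible color triples ($2_a2_b2_c$, $2_b2_c2_a$, $2_a2_b2_a$, $2_b2_b2_a$, $2_b2_a2_b$, $2_a2_a2_b$), concluding that the elements of $\mathcal{N}[b_i]$ are never all in the same color. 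Your ``locally proper'' verification is simpler and does imply that conclusion (if the adjacent pair differs, the triple cannot be monochromatic), and your exclusion of the wrap-around pair in \texttt{CASE-1} via $b_1=b_n\Rightarrow\varphi(a_1)=\varphi(a_n)$ is a clean shortcut around the $1_1\neq 1_n$ argument used in Lemma~\ref{lem:11'}. What your route does not deliver is the explicit triple enumeration, which the paper later cites verbatim in the proof of Lemma~\ref{lem:2a2b} (``The proof of Lemma~\ref{claim-2} specifies the initial colors assigned to $a_i$, $a_j$, and $a_k$''), so in the paper's economy the enumeration is doing double duty.

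One step of your adjacency argument needs a repair under the paper's operative meaning of $\mathcal{N}[b_i]$. Although the text introduces $\mathcal{N}[b_i]$ loosely as the set of cycle vertices adjacent to $b_i$, the dictionary built in \texttt{CONFLICTS-RESOLVING} --- and the proofs of Lemmas~\ref{claim-1} and \ref{claim-2}, which assume every member has a color in $\{2_a,2_b,2_c\}$ --- only stores the $2$-colored neighbors. Under that reading, $b_i$ may have additional leaf neighbors colored $1$ or $1'$ lying strictly inside your three arcs, so your premise ``every leaf strictly inside an arc belongs to the subtree rooted at a non-leaf neighbor of $b_i$'' is false. The wheel $W_5$ is a concrete counterexample: with $n=5$ and final sequence $i\,2_a\,i\,2_b\,2_c$, the hub has five leaf neighbors and no non-leaf neighbor, $\mathcal{N}[b_i]=\{a_2,a_4,a_5\}$ has exactly three vertices, yet two of the three arcs are nonempty (they contain $a_1$ and $a_3$), whereas your literal count of non-leaf neighbors would predict all three empty. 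The fix is one sentence: after the three $2$-colored leaf neighbors, $b_i$ is incident with at most $5-3=2$ further edges, and each corresponding component of $T-b_i$ --- whether a single leaf or a subtree whose leaves form a contiguous arc by planarity --- can populate at most one of the three arcs; pigeonhole then yields an empty arc exactly as you intended.
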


\begin{proof}
Let $\mathcal{N}[b_i] = \{a_i, a_j, a_k\}$. Without loss of generality, assume $i < j < k$. Now, either $j - i = 1$ or $k - j = 1$ must hold. Otherwise, by an argument analogous to that in Lemma \ref{claim-1}, we would deduce that $b_i$ must have a degree of at least six, which contradicts the given condition $\Delta(G) \leq 5$.
Given that $\{\varphi(a_i), \varphi(a_j), \varphi(a_k)\} \subseteq \{2_a, 2_b, 2_c\}$, the \hyperref[alg-recolor]{\texttt{RECOLORING}} algorithm implies that the colors assigned to $a_i, a_j, a_k$ are one of the following by Lemma \ref{coloring}:
\begin{description}[font=\normalfont, style=unboxed, leftmargin=*, itemsep=0ex]
    \item[when $n \equiv 1 \pmod{4}$, $j = n-1$, and $k = n$:] the color sequence is either $2_a 2_b 2_c$ or $2_b 2_c 2_a$;
    \item[when $n \equiv 2 \pmod{4}$, $j = n-1$, and $k = n$:] the color sequence is either $2_a 2_b 2_a$ or $2_b 2_b 2_a$;
    \item[when $n \equiv 3 \pmod{4}$, $i = n-3$, $j = n-2$, and $k = n$:] the color sequence is $2_b 2_a 2_b$;
    \item[when $n \equiv 3 \pmod{4}$, $j = n-3$, and $k = n-2$:] the color sequence is either $2_a 2_a 2_b$ or $2_b 2_a 2_b$.
\end{description}
In either case, elements in $\mathcal{N}[b_i] $ are not in the same color.
\end{proof}

Next, we apply the \hyperref[alg1]{\texttt{CONFLICTS-RESOLVING}} algorithm to resolve potential pairs of conflicts arising from the colors $2_a$ or $2_b$ 
by recoloring additional vertices with the color $2_c$. In the beginning of the \hyperref[alg1]{\texttt{CONFLICTS-RESOLVING}} algorithm, we determine $\mathcal{N}[b_i]$ for every $1 \leq i \leq n$ by building a dictionary $\mathcal{N}$. Given a dictionary $\mathcal{N}: K \to Y$ (where $K$ is the set of keys and $Y$ is the set of values), the \textit{domain} is defined as:
\[
\text{dom}(\mathcal{N}) = \{ k \in K \mid \exists y \in Y, \mathcal{N}(k) = y \}.
\]
This means $\text{dom}(\mathcal{N})$ is the set of all keys $k$ for which $\mathcal{N}[k]$ has an associated value. One can easily see that the keys for the dictionary $\mathcal{N}$ here are $b_1, b_2, \ldots, b_n$. 
After this, we check for every even $i$ whether $\mathcal{N}[b_i]$ contains a pair of vertices $a_i$ and $a_j$ such that they are both colored $2_a$ or $2_b$. Note that if such a pair exists, then $j$ is uniquely determined by $i$ according to Lemma \ref{claim-2}; thereafter, we recolor $a_i$ with $2_c$ if $\varphi(a_{i-2}) \neq 2_c$, or $a_j$ with $2_c$ otherwise.

\begin{algorithm}[H]\label{alg1}
    \caption{\textbf{CONFLICTS-RESOLVING($\varphi(a_1),\varphi(a_2),\cdots,\varphi(a_n)$)}}
    
\LinesNumbered
\DontPrintSemicolon

\KwIn{Colors $\varphi(a_1),\varphi(a_2),\cdots,\varphi(a_n)$ of vertices $a_1,a_2,\cdots,a_n$}
\KwOut{Colors of $a_1,a_2,\cdots,a_n$ after recoloring}

$\mathcal{L} \gets \{(a_1, b_1), (a_2, b_2), \dots, (a_n, b_n)\}$ \\

     $\mathcal{N} \gets \emptyset$ \tcp*{Initialize an empty dictionary}
    
    \ForEach{$(a, b) \in \mathcal{L}$ \textbf{and} $\varphi(a) \notin \{1, 1'\}$}{
        \eIf{$b \in \text{dom}(\mathcal{N})$}{ \tcp*{Check if $b$ is a key in $\mathcal{N}$}
            $\mathcal{N}[b] \gets \mathcal{N}[b] \cup \{a\}$ \tcp*{Add $a$ to the set of $b$'s neighbors}
        }{
            $\mathcal{N}[b] \gets \{a\}$ \tcp*{Initialize $b$'s neighbor set with $\{a\}$}
        }
    }
    
    \Return{$\mathcal{N}$} \tcp*{Return the constructed dictionary}

$a_0 \gets a_n$ \\
\For{$i \gets 1$ \textbf{to} $n$ \label{algo4:line9}}{
    \If{$i \equiv 0 \pmod{2}$}{
        $J \gets \{j~|~a_j \in \mathcal{N}[b_i]\} \setminus \{i\}$\\ 
        \ForEach{$j$ in $J$}{
            \If{$\varphi(a_i) = \varphi(a_j) ~ \textbf{and}~  \varphi(a_i) \neq 2_c$}{
                \If{$\varphi(a_{i-2}) \neq 2_c$}{
                    $\varphi(a_i) \gets 2_c$
                }
                \If{$\varphi(a_{i-2}) = 2_c$ \label{algo4:line16}}{
                    $\varphi(a_j) \gets 2_c$ \label{algo4:line17}
                }
                \tcp*{Exit the inner loop after processing}
                \textbf{break}
            }
        }
    }
}
\end{algorithm}

Finally, we integrate the \hyperref[alg-recolor]{\texttt{RECOLORING}} algorithm and the \hyperref[alg1]{\texttt{CONFLICTS-RESOLVING}} algorithm to formulate the \hyperref[alg-final]{\texttt{PACKING-COLORING}} algorithm.
This ultimately yields the desired $S$-packing coloring of the Halin graph $G$, satisfying all distance constraints specified by the sequence $S:=(1^2,2^3)$.
However, this conclusion is not self-evident.
This will be verified in the next section.

 \begin{algorithm}[H]\label{alg-final}
        \caption{\textbf{PACKING-COLORING($G$)}}
        \LinesNumbered
\DontPrintSemicolon
        \KwIn{A Halin graph $G = T\cup C$ with $\Delta(G)\leq 5$, where $T$ is the characteristic tree and $C:=a_1a_2\cdots a_na_1$ is the adjoint cycle of $G$.}

        \KwOut{A $(1^2,2^3)$-packing coloring of $G$}

\tcp*{Construct a 2-coloring $\varphi$ of $T$ by two colors $1$ and $1'$}
Initialize an empty coloring $\varphi$ for all vertices in $T$ \label{algo:pc-line1} \\
Choose an arbitrary root vertex $r$ in $T$ \\
Perform a BFS starting from $r$ to traverse the tree $T$ \\
\ForEach{vertex $v$ in $T$ in the order of traversal}{
    \If{$v$ is the root $r$}{
        $\varphi(v) \gets 1$
    }
    \Else{
        \If{the parent of $v$ has color $1$}{
            $\varphi(v) \gets 1'$
        }
        \Else{
            $\varphi(v) \gets 1$  \label{algo:pc-line11}
        }
    }
}
 
\textbf{RECOLORING($\varphi(a_1),\varphi(a_2),\cdots,\varphi(a_n)$)} \label{algo:pc-line12}\\
\textbf{CONFLICTS-RESOLVING($\varphi(a_1),\varphi(a_2),\cdots,\varphi(a_n)$)}  \label{algo:pc-line13}
\end{algorithm}

\section{Algorithm Verification and Complexity} \label{sec:3}

The \hyperref[alg-final]{\texttt{PACKING-COLORING}} algorithm outputs a coloring of $G$ using colors from $\{1,1',2_a,2_b,2_c\}$. To verify this is a $(1^2,2^3)$-packing coloring, we show a series of lemmas as follows.

\begin{lem} \label{lem:11'}
Vertices colored with $1$ (or $1'$) induce an independent set.
\end{lem}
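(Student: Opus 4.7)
The plan is to verify the claim separately on the two kinds of edges of $G=T\cup C$, using three structural facts. (i) Lines \ref{algo:pc-line1}--\ref{algo:pc-line11} of \hyperref[alg-final]{\texttt{PACKING-COLORING}} produce a proper $2$-coloring of $T$ with colors $\{1,1'\}$. (ii) The only vertices ever modified by \hyperref[alg-recolor]{\texttt{RECOLORING}} are the leaves $a_1,\dots,a_n$, and whenever such a leaf $a_i$ is (re)assigned a color in $\{1,1'\}$ that color is by definition $1_i=\{1,1'\}\setminus\{\varphi(b_i)\}$, so $\varphi(a_i)\neq\varphi(b_i)$. (iii) \hyperref[alg1]{\texttt{CONFLICTS-RESOLVING}} only ever overwrites colors with $2_c$, so it neither creates nor destroys a vertex colored from $\{1,1'\}$.

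From (i)--(iii) the tree-edge case is immediate: any edge of $T$ between two non-leaf vertices keeps the proper BFS coloring, and any leaf edge $b_i a_i$ either has $\varphi(a_i)\in\{2_a,2_b,2_c\}$ (nothing to check) or has $\varphi(a_i)=1_i\neq\varphi(b_i)$.

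For the cycle edges I would appeal to Lemma \ref{coloring}. A direct inspection of each of its seven output sub-sequences shows that among the consecutive pairs $(a_k,a_{k+1})$ with $k\in[n-1]$ the symbol $i$ (meaning a color from $\{1,1'\}$) never appears twice in a row; moreover, the wrap-around pair $(a_n,a_1)$ can be a pair of $i$'s only in the four sub-cases of \hyperref[alg-case-1]{\texttt{CASE-1}}, because in every \hyperref[alg-case-2]{\texttt{CASE-2}} sub-case one has $\varphi(a_n)\in\{2_a,2_b,2_c\}$. The crux is therefore to rule out this wrap-around in \hyperref[alg-case-1]{\texttt{CASE-1}}: whichever branch of \hyperref[alg-recolor]{\texttt{RECOLORING}} dispatches to \hyperref[alg-case-1]{\texttt{CASE-1}}, the (possibly cyclically relabeled) BFS coloring satisfies $\varphi(a_1)\neq\varphi(a_n)$; since the BFS forces $\varphi(a_i)=\{1,1'\}\setminus\{\varphi(b_i)\}$ on every leaf, this is equivalent to $\varphi(b_1)\neq\varphi(b_n)$, whence $1_1\neq 1_n$ and the wrap-around edge is bichromatic after the recoloring. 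The main obstacle is really just the bookkeeping of the sub-case analysis from Lemma \ref{coloring}; once the only potentially bad configuration has been isolated, the inequality $1_1\neq 1_n$ is an immediate consequence of the dispatching condition for \hyperref[alg-case-1]{\texttt{CASE-1}}, and fact (iii) ensures that nothing done afterwards can spoil the conclusion.
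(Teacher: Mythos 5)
Your proposal is correct and takes essentially the same route as the paper's proof: properness inside $T$ from the BFS $2$-coloring together with the $1_k$-avoidance rule, no two consecutive $\{1,1'\}$-colored vertices along $C$ from the output color sequences, the wrap-around edge $a_na_1$ in \texttt{CASE-1} ruled out because the dispatching condition $\varphi(a_1)\neq\varphi(a_n)$ forces $\varphi(b_1)\neq\varphi(b_n)$ and hence $1_1\neq 1_n$, and finally the observation that \texttt{CONFLICTS-RESOLVING} only introduces the color $2_c$. The differences are purely presentational (your explicit sub-case inspection of Lemma~\ref{coloring} versus the paper's direct appeal to the algorithms' behavior), not substantive.
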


\begin{proof}
   The \hyperref[alg-final]{\texttt{PACKING-COLORING}} algorithm begins by properly coloring the characteristic tree $T$ using two colors, $1$ and $1'$. Upon reaching line \ref{algo:pc-line12}, the algorithm proceeds to recolor all vertices on the adjoint cycles according to two distinct cases. In each case, when a vertex $a_k$ from the set $\{1,1'\}$ is recolored using either the \hyperref[alg-case-1]{\texttt{CASE-1}} or \hyperref[alg-case-2]{\texttt{CASE-2}} algorithm, the color chosen is distinct from the one currently assigned to $b_k$. Throughout the execution of the algorithms \hyperref[alg-case-1]{\texttt{CASE-1}}, \hyperref[alg-case-2]{\texttt{CASE-2}}, and \hyperref[alg-recolor]{\texttt{RECOLORING}}, no two consecutive vertices in the adjoint cycle $C$ are assigned the same color. It is noteworthy that, in the \hyperref[alg-case-1]{\texttt{CASE-1}} algorithm, vertex $a_1$ is reassigned the color $1_1$, while vertex $a_n$ is always reassigned the color $1_n$. Given that the original colors of $a_1$ and $a_n$, as determined by lines \ref{algo:pc-line1}--\ref{algo:pc-line11} of the \hyperref[alg-final]{\texttt{PACKING-COLORING}} algorithm and lines \ref{algo:recolor-line3}--\ref{algo:recolor-line6} of the \hyperref[alg-recolor]{\texttt{RECOLORING}} algorithm, are distinct, it follows that $\varphi(b_1) \neq \varphi(b_n)$ and thus $1_1 \neq 1_n$. Consequently, the vertices colored with $1$ (or $1'$) induce an independent set.

Given that the \hyperref[alg1]{\texttt{CONFLICTS-RESOLVING}} algorithm does not introduce any new vertices colored with $1$ or $1'$, the conclusion that vertices colored with $1$ (or $1'$) induce an independent set remains valid even after executing line \ref{algo:pc-line13}.
\end{proof}

A \textit{2-independent set} is defined as a vertex subset in which the distance between any pair of vertices is at least 3.

\begin{lemma} \label{lem:2a2b}  
Once the execution of the \hyperref[alg1]{\texttt{CONFLICTS-RESOLVING}} algorithm is finished,
\begin{enumerate}
    \item vertices colored with $2_a$ (or $2_b$) induce a $2$-independent set;
    \item  for every $i \in [n]$, the set $\mathcal{N}[b_i]$ does not contain two vertices colored with the color $2_c$.\label{obs3.3-2}
\end{enumerate}

\end{lemma}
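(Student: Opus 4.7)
The plan is to reduce both statements to the structural catalogue provided by Lemmas~\ref{claim-1}, \ref{coloring}, and \ref{claim-2}, and then verify by case analysis that \textbf{CONFLICTS-RESOLVING} eliminates every $2_a/2_b$ conflict while never creating a second $2_c$ inside any single $\mathcal{N}[b_i]$.

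First I would note that, since each $a_i$ is a leaf of $T$, every $G$-path of length at most $2$ between distinct cycle vertices $a_i$ and $a_j$ either stays on $C$ (forcing cyclic distance $\leq 2$) or uses exactly one internal tree vertex, which must then coincide with both $b_i$ and $b_j$ (so $a_j\in\mathcal{N}[b_i]$). A direct inspection of the explicit sequences in Lemma~\ref{coloring} shows that two vertices of the same color $2_a$ (resp.\ $2_b$) are never at cyclic distance $\leq 2$. Hence any potential $2_a/2_b$ conflict must take the form $\{a_i,a_j\}\subseteq \mathcal{N}[b_i]$.

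For part (1), I would combine the bound $|\mathcal{N}[b_i]|\leq 3$ from Lemma~\ref{claim-1} with the restricted color triples from Lemma~\ref{claim-2} to enumerate all possible conflicting pairs. A scan of the sequences in Lemma~\ref{coloring} then reveals that every such pair contains at least one vertex at an even index: the sporadic odd-index $2_a$ or $2_b$ that a boundary switch can introduce never appears twice, so its partner is forced to be even. Consequently the outer loop of \textbf{CONFLICTS-RESOLVING}, which sweeps even $i$ only, detects every conflict and rewrites exactly one of the two offenders to $2_c$. Since recoloring to $2_c$ strictly removes a vertex from the $2_a$ and $2_b$ classes, the procedure monotonically drives the number of $2_a/2_b$ same-color pairs to zero, establishing the claimed $2$-independence.

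For part (2), I would track the $2_c$-vertices through the execution. After \textbf{RECOLORING}, $2_c$ is used at most a few times (only in the special subcases of \textbf{CASE-2} listed in Lemma~\ref{coloring}), and the catalogue shows directly that no $\mathcal{N}[b_k]$ yet contains two $2_c$-vertices. Every subsequent $2_c$ is produced by \textbf{CONFLICTS-RESOLVING} at some even $i$, overwriting either $a_i$ or its same-$b$ partner $a_j$. The guard $\varphi(a_{i-2})\neq 2_c$ is tailored precisely to block the only mechanism by which a second $2_c$ could enter $\mathcal{N}[b_i]$: namely $b_{i-2}=b_i$ with $a_{i-2}$ already coloured $2_c$; when that guard fires, the algorithm recolors $a_j$ instead. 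The main technical obstacle, I expect, is verifying the alternative branch: one must show that no member of $\mathcal{N}[b_j]\setminus\{a_j\}$ was previously coloured $2_c$. This should follow by combining $|\mathcal{N}[b_j]|\leq 3$, the colour-triple restriction of Lemma~\ref{claim-2}, and the left-to-right processing order, which ensures that every existing $2_c$-vertex sits at an index already scanned and hence already positioned unambiguously within some $\mathcal{N}[\,\cdot\,]$. Once this last check is settled, both conclusions of the lemma follow.
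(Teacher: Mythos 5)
Your argument for part (1) is sound and follows essentially the same route as the paper: any same-color $2_a/2_b$ pair at distance at most $2$ forces $b_i=b_j$ (the paper states this as the definition of an AB-conflict), the even-index sweep detects each such conflict, and recoloring one offender to $2_c$ monotonically eliminates conflicts without creating new ones since $\mathcal{N}[b_i]\cap\mathcal{N}[b_j]=\emptyset$ for $b_i\neq b_j$. Your explicit observation that the sporadic odd-index $2_a$ or $2_b$ occurs at most once per coloring, so every conflicting pair has an even member, is a point the paper leaves implicit, and making it explicit is a genuine improvement in rigor.

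Part (2), however, contains a real misstep. You claim the guard $\varphi(a_{i-2})\neq 2_c$ is ``tailored precisely to block the only mechanism by which a second $2_c$ could enter $\mathcal{N}[b_i]$,'' namely $b_{i-2}=b_i$ with $a_{i-2}$ already colored $2_c$. This misreads the algorithm: the guard at lines \ref{algo4:line16}--\ref{algo4:line17} concerns \emph{cyclic} proximity of $2_c$-vertices (it exists to make the $2$-independence argument of Lemma~\ref{lem:2c} work), and it fires whether or not $b_{i-2}=b_i$. Moreover, the scenario you describe as the sole mechanism can never occur: if $\mathcal{N}[b_i]$ contained a $2_c$-vertex together with a same-colored $2_a/2_b$ pair, it would be a triple containing both $2_c$ and a monochromatic pair, but the catalogue in the proof of Lemma~\ref{claim-2} shows the only triples containing $2_c$ are $2_a2_b2_c$ and $2_b2_c2_a$, which contain no conflict. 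This observation is exactly what the paper uses, and it is what closes the check you leave open (``This should follow by combining\ldots''): every $\mathcal{N}[b_i]$ harboring an AB-conflict contains \emph{no} $2_c$ before resolution, exactly one vertex of it is recolored (at most one conflict per $b_i$ by the triple catalogue, and the inner \textbf{break} enforces one recoloring), newly introduced $2_c$-vertices land only in the $\mathcal{N}[b_i]$ whose conflict produced them, distinct $\mathcal{N}[\,\cdot\,]$ are disjoint, and Lemma~\ref{coloring} rules out two \emph{original} $2_c$-vertices sharing a $b$. With that substitution, both branches of the guard are equally safe for part (2), and your proof of part (2) becomes correct; as written, it rests on a false structural claim and an unverified step.
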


\begin{proof}
Upon completing line \ref{algo:pc-line12} of the \hyperref[alg-final]{\texttt{PACKING-COLORING}} algorithm, the color $2_a$ or $2_b$ is present exclusively within the adjoint cycle $C$.

Now, it is straightforward to observe that if two vertices $a_i$ and $a_j$ are both colored with $2_a$ (or $2_b$) and are located at a distance of at most $2$ from each other, then $b_i = b_j$. In this case, we simply say that vertex $a_i$ has an \textit{AB-conflict} caused by $b_i$.

We now show that the \hyperref[alg1]{\texttt{CONFLICTS-RESOLVING}} algorithm can resolve all AB-conflicts.
Suppose that $a_i$ has an AB-conflict caused by $b_i$. It follows $|\mathcal{N}[b_i]|\geq 2$.

If $|\mathcal{N}[b_i]|=2$, then there is another vertex $a_j$ adjacent to $b_i$ that has the same color with $a_i$. 
The \hyperref[alg1]{\texttt{CONFLICTS-RESOLVING}} algorithm would recolor one of $a_i$ or $a_j$ with the color $2_c$, thus resolving the AB conflict caused by $b_i$.

If $|\mathcal{N}[b_i]|\neq 2$, then $|\mathcal{N}[b_i]|=3$ by Lemma \ref{claim-1}. Let $a_i$, $a_j$, and $a_k$ be the three vertices on $C$ that are adjacent to vertex $b_i$. The proof of Lemma \ref{claim-2} specifies the initial colors assigned to $a_i$, $a_j$, and $a_k$. Upon executing the \hyperref[alg1]{\texttt{CONFLICTS-RESOLVING}} algorithm, exactly one of these vertices ($a_i$, $a_j$, or $a_k$) is recolored. Consequently, after the algorithm completes, the colors of $a_i$, $a_j$, and $a_k$ will consist of $2_a$, $2_b$, and $2_c$. This resolves the AB-conflict caused by $b_i$. 

On the other hand, the \hyperref[alg1]{\texttt{CONFLICTS-RESOLVING}} algorithm would not introduce new AB-conflicts, because $\mathcal{N}[b_i]\cap \mathcal{N}[b_j]=\emptyset$ for every $b_i\neq b_j$.
Therefore, when the \hyperref[alg-final]{\texttt{PACKING-COLORING}} algorithm is applied, every pair of vertices colored with $2_a$ or $2_b$ will be at a distance of at least 3 from each other. This proves the first conclusion.

Next, we can prove the second conclusion in a similar manner. Suppose that, at this stage, there exists some $b_i$ such that $\mathcal{N}[b_i]$ contains two vertices colored with the color $2_c$. Then, there must have been an AB-conflict caused by $b_i$ and $|\mathcal{N}[b_i]| = 3$. However, the previous proof demonstrates that upon applying the \hyperref[alg1]{\texttt{CONFLICTS-RESOLVING}} algorithm, the three vertices in $\mathcal{N}[b_i]$ are respectively colored with $2_a$, $2_b$, and $2_c$. This contradicts our assumption, thus ruling out such a case.
\end{proof}

If the color of a vertex $a_i$ is $2_c$ before applying the \hyperref[alg1]{\texttt{CONFLICTS-RESOLVING}} algorithm, then we refer to $a_i$ as an \textit{original $C$-vertex}. If a vertex $a_i$ is an original $C$-vertex, and $\mathcal{N}[b_i]$ contains either vertices in color $2_a$ or $2_b$, then we say that $a_i$ is \textit{separated}.

If the color of a vertex $a_i$ is not $2_c$ before applying the \hyperref[alg1]{\texttt{CONFLICTS-RESOLVING}} algorithm, but it is recolored to $2_c$ after the algorithm is applied, then we refer to $a_i$ as a \textit{newly introduced $C$-vertex}.
For a newly introduced $C$-vertex $a_i$, it must have an AB-conflict caused by $b_i$ before applying the \hyperref[alg1]{\texttt{CONFLICTS-RESOLVING}} algorithm. Therefore, we let $a_j \in \mathcal{N}[b_i]$ be a vertex such that $a_i$ and $a_j$ have the same color. Note that $j$ is uniquely determined by $i$ according to Lemmas \ref{claim-1} and \ref{claim-2}. For convenience, we set $i^* := j$.

\begin{lem} \label{claim:iteration}
Suppose that \( a_i \) and \( a_j \) are both colored with \( 2_a \) or \( 2_b \) prior to the execution of the \\
\hyperref[alg1]{\texttt{CONFLICTS-RESOLVING}} algorithm, and \( b_i = b_j \) where \( i < j \).  
If \( a_j \) is a newly introduced \( C \)-vertex, then prior to applying the \hyperref[alg1]{\texttt{CONFLICTS-RESOLVING}} algorithm, at least one of the following properties must hold:
\begin{enumerate}[label={\bfseries Property \Roman*}]
    \item Either \( a_{i-2} \) is an original \( C \)-vertex (see Figure \ref{fig:claim}(a)) or a newly introduced \( C \)-vertex with \( (i-2)^* > j \) (see Figure \ref{fig:claim}(b)); or
    \item Property I does not hold, and there exists an original \( C \)-vertex \( a_{k_0} \) and a sequence of newly introduced \( C \)-vertices \( \{a_{k_\ell} : 1 \leq \ell \leq r\} \) along the clockwise path from \( a_n \) to \( a_i \) on \( C \) ($k_0=n$ is possible) such that:
    \begin{itemize}
        \item \( k_r = i-2 \),
        \item \( k_\ell^* = k_{\ell-1} + 2 \) for all \( 1 \leq \ell \leq r \), and
        \item Vertices in \( \{a_{k_\ell + 1}\} \) are colored \( 1 \) or \( 1' \) for every \( 1 \leq \ell \leq r \) (see Figure \ref{fig:claim} (c)); or
    \end{itemize}
    \item Neither I nor II holds, and there exists a sequence of newly introduced \( C \)-vertices \( \{a_{k_\ell} : 0 \leq \ell \leq r\} \) along the clockwise path from \( a_n \) to \( a_i \) on \( C \) such that:
    \begin{itemize}
        \item \( k_0^* > j \),
        \item \( k_r = i-2 \),
        \item \( k_\ell^* = k_{\ell-1} + 2 \) for all \( 1 \leq \ell \leq r \), and
        \item Vertices in \( \{a_{k_{\ell-1} + 1}\} \) are colored \( 1 \) or \( 1' \) for every \( 1 \leq \ell \leq r \) (see Figure \ref{fig:claim} (d)).
    \end{itemize}
    
\end{enumerate}

    \begin{figure}[H]  
    \centering
    \scalebox{1.0}{
        \input{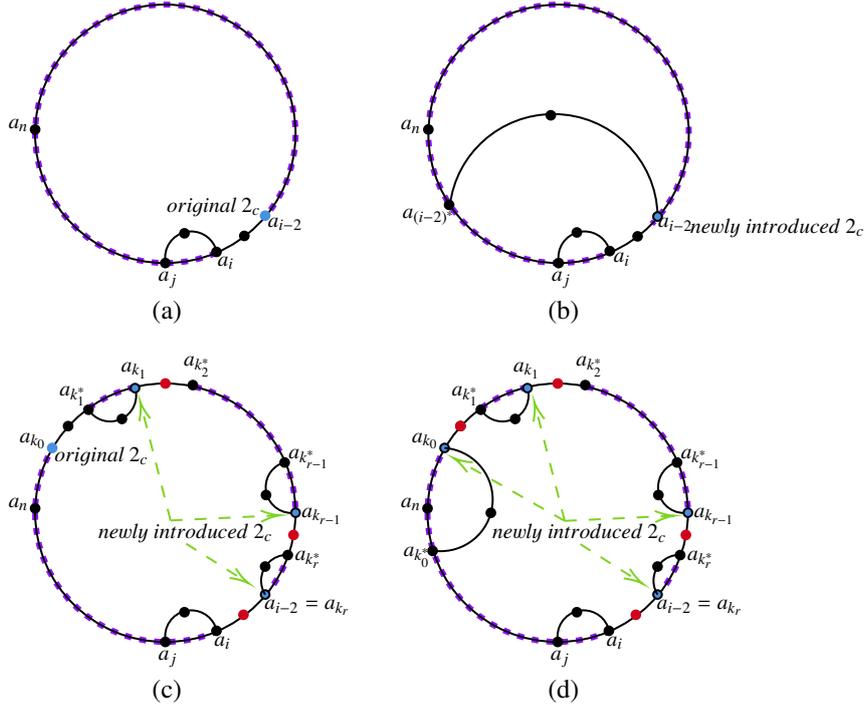}
    }
    \caption{Illustrations of Properties I,II, and III}
    \label{fig:claim}
\end{figure}

\end{lem}

\begin{proof}
Since \( a_j \) is a newly introduced \( C \)-vertex, we have \( i = j^* \).  
By lines \ref{algo4:line16} and \ref{algo4:line17} of 
\hyperref[alg1]{\texttt{CONFLICTS-RESOLVING}}, when the loop at line \ref{algo4:line9} reaches this specific \( i \), the color of \( a_{i-2} \) must be \( 2_c \).  
If \( a_{i-2} \) is an original \( C \)-vertex or an newly introduced $C$-vertex with $(i-2)^*>j$, the process terminates and we are done.  
Otherwise, \( a_{i-2} \) is a newly introduced \( C \)-vertex with $(i-2)^*<i$. By Lemma \ref{obs3.3-2} the color of \( a_{i-1} \) must be \( 1 \) or \( 1' \).

By replacing \( a_{i-2} \) (i.e., \( a_{j^*-2} \)) with \( a_j \) in the prior analysis, we deduce the existence of another newly introduced \( C \)-vertex encountered earlier along \( C \) (leveraging the property that the Halin graph has no edge crossings).  
Again, by lines \ref{algo4:line16} and \ref{algo4:line17}, when the loop reaches \( (i-2)^* \), the color of \( a_{(i-2)^*-2} \) must be \( 2_c \).  

\begin{itemize}[label=$-$]
    \item If \( a_{(i-2)^*-2} \) is an original \( C \)-vertex or a newly introduced $C$-vertex with $((i-2)^*-2)^*>j$, the process terminates with \( r = 1 \). 
    By Lemma \ref{coloring}, \( a_{(i-2)^*-1} \) has color \( 1 \) or \( 1' \).  
    \item Otherwise, we replace the newly introduced \( C \)-vertex \( a_{(i-2)^*-2} \) with \( a_{i-2} \) and identify \( a_{((i-2)^*-2)^*-2} \). This vertex is either:  
    \begin{itemize}[label=$\bullet$]
        \item an original \( C \)-vertex or a newly introduced $C$-vertex with $(((i-2)^*-2)^*-2)^*>j$ (terminating the process with $r=2$), or  
        \item a newly introduced \( C \)-vertex  with $(((i-2)^*-2)^*-2)^*<j$ (necessitating further iteration).  
    \end{itemize}
\end{itemize}
However, this iteration cannot proceed indefinitely since \( n \) is finite, ensuring termination.
\end{proof}

\begin{lem} \label{lem:2c}
 Once the execution of the \hyperref[alg1]{\texttt{CONFLICTS-RESOLVING}} algorithm is finished,
vertices colored with $2_c$ induce a $2$-independent set.
\end{lem}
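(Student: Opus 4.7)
Let $a_p$ and $a_q$ be two distinct $2_c$-colored vertices with $p<q$. Because the characteristic tree $T$ has no vertex of degree $2$, every cycle vertex is a leaf of $T$ and hence any pair of cycle vertices is non-adjacent in $T$. Consequently any $G$-path of length at most $2$ between $a_p$ and $a_q$ either lies on $C$ or passes through a common tree neighbor $b=b_p=b_q$; the latter is excluded by Lemma~\ref{lem:2a2b}(\ref{obs3.3-2}). It therefore suffices to verify that the cyclic distance between $p$ and $q$ along $C$ is at least $3$.

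I split the argument by the provenance (original or newly introduced) of the two $2_c$-vertices. If both are original, Lemma~\ref{coloring} shows that the only color sequence produced by \hyperref[alg-case-2]{\texttt{CASE-2}} that contains two original $2_c$'s is the $n\equiv 1\pmod{4}$ pattern triggered by the $b_n=b_{n-1}=b_i$ recoloring, placing them at positions $2$ and $n-1$; the side condition forces $\mathcal{N}[b_n]$ to hold three vertices colored with three distinct values from $\{2_a,2_b,2_c\}$, whence $n\geq 9$ and the cyclic distance is exactly~$3$. When exactly one of $a_p,a_q$ is original, I enumerate the possible positions of the original $2_c$ (by Lemma~\ref{coloring} they lie in $\{2,n-1,n\}$) and use that a newly introduced $2_c$-vertex sits in some $\mathcal{N}[b_k]$ with $|\mathcal{N}[b_k]|\geq 2$; combined with Lemma~\ref{lem:2a2b}(\ref{obs3.3-2}), this rules out cyclic distances $1$ and $2$ by a short case check.

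The main obstacle is the case in which both $a_p$ and $a_q$ are newly introduced and the cyclic distance between them is at most $2$. The plan is to apply Lemma~\ref{claim:iteration} to $a_q$ with $j:=q$ and $i:=q^{*}$, obtaining a chain $a_{k_0},\ldots,a_{k_r}=a_{i-2}$ of newly introduced $C$-vertices running clockwise from $a_n$ and satisfying one of Properties~I, II, or~III. Running the same analysis for $a_p$ yields a second chain, and since each step $k_\ell\mapsto k_{\ell-1}$ strictly decreases the cycle index, the clockwise ordering of $C$ forces the two chains either to overlap (producing two $2_c$-vertices sharing a tree neighbor, in violation of Lemma~\ref{lem:2a2b}(\ref{obs3.3-2})) or to demand an inequality of the form $k_\ell^{*}>q$ that is incompatible with $p<q$. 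Adjacency $q=p+1$ is ruled out separately via Lemma~\ref{claim-2}, which prevents two cycle-consecutive vertices of any $\mathcal{N}[b_i]$ from sharing a color. The bookkeeping to eliminate all three alternatives of Lemma~\ref{claim:iteration} simultaneously, together with the wrap-around through $a_n$, is the technical crux of the argument.
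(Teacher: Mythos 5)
Your overall architecture matches the paper's: reduce to cyclic distance along $C$ via Lemma~\ref{lem:2a2b}(\ref{obs3.3-2}), split by the provenance of the two $2_c$-vertices, and invoke Lemma~\ref{claim:iteration} in the hardest case; the both-original case is handled correctly and agrees with the paper. But two of your steps conceal genuine gaps. First, the mixed case is not the ``short case check'' you describe. Take the pattern $i2_a i2_b\cdots i2_a i2_b 2_c$ with the original $2_c$ at $a_n$ and ask why $a_{n-1}$ (initially $2_b$) cannot be recolored to $2_c$: nothing in Lemma~\ref{lem:2a2b}(\ref{obs3.3-2}) or in the fact $|\mathcal{N}[b_k]|\geq 2$ prevents the resolution loop from doing exactly that when $b_{n-1}$ causes an AB-conflict and $\varphi(a_{n-3})\neq 2_c$. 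Excluding it requires tracing the recolorings backwards through Property~II of Lemma~\ref{claim:iteration}: one concludes $k_0=n$, hence $k_1^*=2$ and $b_2$ is adjacent to a $2_a$-vertex other than $a_2$, and then the side conditions of \texttt{CASE-2} together with planarity force the $2_a/2_b$-vertex in $\mathcal{N}[b_n]$ to be $a_1$, which is colored $1$ or $1'$ --- a contradiction. The paper's Case~2 spends exactly this effort in each of three subcases; your plan omits it entirely. Second, dismissing adjacency $q=p+1$ via Lemma~\ref{claim-2} is unsound: that lemma constrains the colors, \emph{before} conflict resolution, of vertices inside a single $\mathcal{N}[b_i]$, and says nothing about two cycle-adjacent vertices with \emph{distinct} tree parents both ending up $2_c$ \emph{after} resolution --- which is precisely the configuration (e.g., $s=n-1$, $t=n$ above) that the hard cases must rule out.

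For the both-newly-introduced case, applying Lemma~\ref{claim:iteration} to both vertices and hoping that ``the chains overlap or an inequality fails'' is the right instinct but not yet an argument. The paper applies the lemma once, to the \emph{smaller} index $s$ (with $j:=s$, $i:=s^*$), after first pinning down the facts your sketch lacks: $t^*<s$ (from lines \ref{algo4:line16}--\ref{algo4:line17} of \texttt{CONFLICTS-RESOLVING}), that no original $C$-vertex lies strictly between $t^*$ and $s$ (original $2_c$'s occur only at positions $2$, $n-1$, $n$, and $a_1,a_3$ carry $1$ or $1'$), and the planarity orderings $t^*<s^*<s$ and $x^*\leq s$ for every $t^*<x<s$. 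These kill Property~III immediately ($k_0^*>s$ would force $t^*<k_0<s$, contradicting $x^*\leq s$) and collapse Property~II to $k_0=2$, $k_1^*=4$, $t^*=3$, contradicting that $a_3$ is colored $1$ or $1'$. Without these concrete pins --- which is where the planarity of the Halin embedding actually enters --- the wrap-around bookkeeping you yourself flag as ``the technical crux'' is exactly the part left unproved.
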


\begin{proof}
    We prove this result by contradiction. Suppose that $a_s$ and $a_t$ ($1<s<t$) are colored with $2_c$ by the \hyperref[alg-final]{\texttt{PACKING-COLORING}} algorithm and they are in a distance of at most 2. According to Lemma \hyperref[obs3.3-2]{\texttt{3.3}}, the distance between $a_s$ and $a_t$ along $C$ must be at most 2.

If both $a_s$ and $a_t$ are not newly introduced $C$-vertices, then Lemma \ref{coloring} tells us that we are in the unique case that the color sequence assigned to $a_1, a_2, \ldots, a_n$ is $i2_c i2_b i2_a i2_b \cdots i2_a i2_b i2_a i2_c 2_a$. Thus, the distance between $a_s$ and $a_t$ along $C$ is 3, a contradiction.

Next, we consider three distinct cases for further analysis.

\textbf{Case 1}.
$a_s$ is not a newly introduced $C$-vertex, whereas 
$a_t$ is a newly introduced $C$-vertex.

In this case, we have $s \neq n$. Consequently, according to Lemma \ref{coloring}, the colors assigned to $a_1, a_2, \ldots, a_n$ fall into one of the following three subcases:

\begin{enumerate}[label={(\roman*)}]
    \item \(i2_{c}i2_{b}i2_{a}i2_{b}\cdots i2_{a}i2_{b}i2_{a}i2_{c}2_{a}\);
    \item \(i2_{c}i2_{b}i2_{a}i2_{b}\cdots i2_{a}i2_{b}2_{a}\);
    \item \(i2_{c}i2_{b}i2_{a}i2_{b}\cdots i2_{a}i2_{b}i2_{a}\).
\end{enumerate}

For subcase (i), there are two possibilities: either $s = 2$ and $t = 4$, or $s = n - 1$ and $t = n$. 

In the situation where $s = 2$ and $t = 4$, by referring to the last six lines of the \hyperref[alg1]{\texttt{CONFLICTS-RESOLVING}} algorithm, $a_4$ cannot be a newly introduced $C$-vertex. This is because $\varphi(a_2)=2_c$. 

In the case where $s = n - 1$ and $t = n$, $a_n$ cannot be a newly introduced $C$-vertex. The reason is that $b_n$ is adjacent to both $a_{n - 1}$ and $a_n$, and it is also adjacent to another vertex $a_j$ that is colored with $2_b$. As a result, there is no AB-conflict caused by $b_n$.

For subcase (ii) or (iii),  we have  $s = 2$ and $t = 4$. Similarly, $a_4$ cannot be a newly introduced $C$-vertex because $\varphi(a_2)=2_c$. 

\textbf{Case 2}.
$a_s$ is a newly introduced $C$-vertex, whereas $a_t$ is not a newly introduced $C$-vertex

In this case, we have $t \neq 2$ (because $\varphi(a_1)\not\in \{2_a,2_b,2_c\}$ and thus it cannot be a newly introduced $C$-vertex). Consequently, according to Lemma \ref{coloring}, the colors assigned to $a_1, a_2, \ldots, a_n$ fall into one of the following three subcases:

\begin{enumerate}[label={(\roman*)}]
    \item \(i2_{a}i2_{b}\cdots i2_{a}i2_{b}2_{c}\), where $\mathcal{N}[b_n]$ has a vertex $a_j$ colored with $2_a$ or $2_b$ and $\mathcal{N}[b_n]$ has no other vertex besides $a_j$ colored with $2_b$ when $j=n-1$,  or $b_2$ is not adjacent to an $a_j \neq a_2$ colored with $2_a$;
    \item \(i2_{c}i2_{b}i2_{a}i2_{b}\cdots i2_{a}i2_{b}i2_{a}i2_{c}2_{a}\), where there exists a vertex $b_j$ adjacent to both $a_{n-1}$ and $a_n$, and also adjacent to another vertex $a_j$ colored with $2_b$.
    \item \(i2_{a}i2_{b}\cdots i2_{a}i2_{b}i2_{c}\), where $\mathcal{N}[b_n]$ has no vertex colored with $2_a$ or $2_b$
        and $b_2$ is adjacent to an $a_j \neq a_2$ colored with $2_a$. 
\end{enumerate}

For subcase (i), we have \( s = n-1 \) and \( t = n \). Since \( a_n \) is colored with \( 2_c \), it follows that \( i^* \not> n-1 \) for all \( i \). Additionally, \( a_{n-1} \) is a newly introduced \( C \)-vertex, and
\( a_{n-3} \) is not colored with \( 2_c \).
Thus, we do not satisfy Properties I or III of Lemma \ref{claim:iteration} (with \( j := n-1 \) and \( i := (n-1)^* \)). Instead, we are in Property II of the lemma, which requires the existence of an original \( C \)-vertex \( a_{k_0} \), and
a sequence of newly introduced \( C \)-vertices \( \{a_{k_\ell} : 1 \leq \ell \leq r\} \) along the clockwise path from \( a_n \) to \( a_i \) on \( C \),
such that \( k_r = i-2 \) and \( k_\ell^* = k_{\ell-1} + 2 \) for all \( 1 \leq \ell \leq r \).
 
Since \( a_n \) is the unique original \( C \)-vertex, we must have \( k_0 = n \). The sequence then satisfies \( k_1^* = 2 \) and that \( b_2 \) is adjacent to some vertex other than \( a_2 \) colored with \( 2_a \).
This implies \( \mathcal{N}[b_n] \cap \{2_a, 2_b\} \neq \emptyset \) by the condition of the subcase (i). Suppose \( b_n \) is adjacent to \( a_x \) (\( x \neq n \)) colored with \( 2_a \) or \( 2_b \). Since \( G \) is a Halin graph, it must satisfy \( k_1^* > x \). Given \( k_1^* = 2 \), the only possible \( x \) is \( x = 1 \). However, \( a_1 \) is colored with \( 1 \) or \( 1' \), contradicting the assumption that \( a_x \) is colored with \( 2_a \) or \( 2_b \).

For subcase (ii), we have \(s = n - 3\) and \(t = n - 1\) since \(a_1\) cannot be a newly introduced \(C\)-vertex. In this subcase, \(b_n\) does not cause any AB-conflict, and \(a_{n - 2}\) is colored either with \(1\) or \(1'\). Consequently, \(i^* \ngeq n - 3\) for all \(i\). Thus, following a similar rationale as before, we find ourselves in Property II of Lemma \ref{claim:iteration} (where \(j := n - 3\) and \(i := (n - 3)^*\)). This property necessitates the presence of an original \(C\)-vertex \(a_{k_0}\), along with a sequence of newly introduced \(C\)-vertices \(\{a_{k_\ell} : 1 \leq \ell \leq r\}\) along the clockwise path from \(a_n\) to \(a_i\) on \(C\). This sequence must satisfy \(k_r = i - 2\) and \(k_\ell^* = k_{\ell - 1} + 2\) for all \(1 \leq \ell \leq r\). 

There are only two original \(C\)-vertices. 
If \(k_0 = n\), then \(k_1^* = 2\), which is impossible because \(a_2\) is an original \(C\)-vertex. If \(k_0 = 2\), then \(k_1^* = 4\).
Since \(a_j\) has the color \(2_b\) and \(\mathcal{N}[b_j] = \{a_j, a_{n-1}, a_n\}\), we deduce that \(k_1^* > j\) by the fact that \(G\) is a Halin graph.
It follows that \(j\leq 3\). However, \(a_1\) and \(a_3\) are colored either with \(1\) or \(1'\), and \(a_2\) is colored with \(2_c\). This contradicts the assumption that \(a_j\) is colored with \(2_b\).

For subcase (iii), we can handle it similarly to how we dealt with subcase (i).

\textbf{Case 3}.
$a_s$ and $a_t$ are both newly introduced $C$-vertex.

First, we have $t^* < s$. Otherwise, by lines \ref{algo4:line16}--\ref{algo4:line17} of the \hyperref[alg1]{\texttt{CONFLICTS-RESOLVING}} algorithm, $a_t$ would not be recolored with $2_c$. By the definition of $t^*$, we know that $b_t$ causes an $AB$-conflict and $a_{t^*}$ is colored with $2_a$ or $2_b$. 
By Lemma~\ref{coloring}, regardless of the situation, $a_1$ is colored with $1$ or $1'$, and original $C$-vertices can exist at most among $a_2$, $a_{n-1}$, and $a_n$. Thus, $t^*\geq 2$ and  $a_{t^*}$ is not an original $C$-vertex. 
If there exists an original $C$-vertex $a_j$ with $t^* < j < s$, then $j = 2$. This implies $t^* = 1$, a contradiction.

Now, since each $a_j$ with $t^* < j < s$ is not an original $C$-vertex, it follows that $a_{s-2}$ is also not an original $C$-vertex. Given that $G$ is a Halin graph, we have $t^* < s^* < s$ and $x^* \leq s$ for every $t^* < x < s$. 

Applying Lemma~\ref{claim:iteration} with $j = s$ and $i=s^*$, we observe that its Property~I does not hold. 

If Property~II holds instead, then there exists an original $C$-vertex $a_{k_0}$ and a sequence of newly introduced $C$-vertices $\{a_{k_\ell} : 1 \leq \ell \leq r\}$ along the clockwise path from $a_n$ to $a_i$ on $C$, such that $k_r = i - 2$ and $k_\ell^* = k_{\ell-1} + 2$ for all $1 \leq \ell \leq r$. Since $G$ is a Halin graph, $k_0 + 2 \pmod{n} = k_1^* > t^* \geq 2$. By Lemma~\ref{coloring}, $k_0$ must be one of $\{2, n-1, n\}$. Thus, $k_0 = 2$ and consequently $k_1^* = 4$. This forces $t^* = 3$. However, $a_3$ is always colored with $1$ or $1'$ by Lemma~\ref{coloring}, contradicting the fact that $a_{t^*}$ is colored with $2_a$ or $2_b$.

We now consider the final case where Property~III of Lemma~\ref{claim:iteration} holds. That is, there exists a sequence of newly introduced $C$-vertices $\{a_{k_\ell} : 0 \leq \ell \leq r\}$ along the clockwise path from $a_n$ to $a_i$ on $C$, satisfying:  
$k_0^* > s$, 
$k_r = i - 2$, and  
$k_\ell^* = k_{\ell-1} + 2$ for all $1 \leq \ell \leq r$.  
Since $G$ is a Halin graph, its structural properties imply $t^* < k_0 < s$. However, this directly contradicts the established condition that $x^* \leq s$ for every $t^* < x < s$.
\end{proof}

\begin{theorem}
    The \hyperref[alg-final]{\texttt{PACKING-COLORING}} algorithm returns a $(1^2,2^3)$-packing coloring.
\end{theorem}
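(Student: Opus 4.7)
The plan is to verify correctness by simply assembling the three structural lemmas already proved in this section. The output coloring uses the palette $\{1,1',2_a,2_b,2_c\}$, and to interpret this as a $(1^2,2^3)$-packing coloring we identify $V_1:=\varphi^{-1}(1)$ and $V_{1'}:=\varphi^{-1}(1')$ as the two classes assigned the integer $1$ in the sequence $(1,1,2,2,2)$, and $V_{2_a},V_{2_b},V_{2_c}$ as the three classes assigned the integer $2$. Thus it suffices to show that $V_1,V_{1'}$ are independent sets (pairwise distance $\geq 2$) and that $V_{2_a},V_{2_b},V_{2_c}$ are each $2$-independent (pairwise distance $\geq 3$).

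First I would invoke Lemma \ref{lem:11'} to conclude that $V_1$ and $V_{1'}$ each induce an independent set; note that this lemma was established after the \hyperref[alg-recolor]{\texttt{RECOLORING}} step but its conclusion is preserved under \hyperref[alg1]{\texttt{CONFLICTS-RESOLVING}}, since that subroutine only overwrites colors $2_a$ or $2_b$ with $2_c$ and never introduces a fresh $1$ or $1'$. Next I would invoke Lemma \ref{lem:2a2b}(1) to conclude that $V_{2_a}$ and $V_{2_b}$ are both $2$-independent after \hyperref[alg1]{\texttt{CONFLICTS-RESOLVING}} terminates. Finally I would invoke Lemma \ref{lem:2c} to conclude that $V_{2_c}$ is $2$-independent as well. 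These five statements together give the five distance conditions required by the sequence $S=(1,1,2,2,2)$.

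There is essentially no further obstacle, since the substantive work has already been carried out: the AB-conflict resolution analysis of Lemma \ref{lem:2a2b} handles the $2_a$ and $2_b$ classes, and the iterative structural dichotomy of Lemma \ref{claim:iteration} (the three properties), applied case by case to newly introduced versus original $C$-vertices, handles the $2_c$ class in Lemma \ref{lem:2c}. The only thing worth remarking on in the write-up is the compatibility across stages, namely that (i) \hyperref[alg1]{\texttt{CONFLICTS-RESOLVING}} does not damage the independence of $V_1$ and $V_{1'}$ (as noted above), and (ii) the recoloring in lines \ref{algo4:line16}--\ref{algo4:line17} does not resurrect any AB-conflict, which was already verified in the proof of Lemma \ref{lem:2a2b} via the observation $\mathcal{N}[b_i]\cap\mathcal{N}[b_j]=\emptyset$ for $b_i\neq b_j$. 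Once these compatibility remarks are in place, the theorem follows by simply concatenating the three lemmas, so the proof should fit in a few lines.
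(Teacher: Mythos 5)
Your proposal is correct and matches the paper's own proof, which likewise derives the theorem directly from Lemmas~\ref{lem:11'}, \ref{lem:2a2b}, and~\ref{lem:2c}; the compatibility remarks you add (preservation of the $1/1'$ independence under \texttt{CONFLICTS-RESOLVING}, and no resurrected AB-conflicts) are already contained in the statements and proofs of those lemmas, so they are harmless but not needed. No gaps.
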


\begin{proof}
    This follows directly from Lemmas~\ref{lem:11'}, \ref{lem:2a2b}, and~\ref{lem:2c}.
\end{proof}

\begin{theorem}
  The \hyperref[alg-final]{\texttt{PACKING-COLORING}} algorithm operates in linear time, i.e., it has a time complexity of \(O(|G|)\).
\end{theorem}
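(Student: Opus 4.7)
The plan is to verify linear time by auditing each of the three stages of \hyperref[alg-final]{\texttt{PACKING-COLORING}} in turn (the BFS 2-coloring of $T$, the call to \hyperref[alg-recolor]{\texttt{RECOLORING}}, and the call to \hyperref[alg1]{\texttt{CONFLICTS-RESOLVING}}) and show that each runs in $O(n)$ time, where $n = |V(G)|$. Since a Halin graph is planar with $|E(G)| = O(n)$, every traversal-style operation that visits vertices or incident edges a constant number of times will be $O(n)$. We assume the standard adjacency-list representation of $G$ together with the split $G = T \cup C$, which can be computed in $O(n)$ from the input.

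First I would handle lines~\ref{algo:pc-line1}--\ref{algo:pc-line11} of \hyperref[alg-final]{\texttt{PACKING-COLORING}}: this is a BFS from an arbitrary root of $T$ together with an $O(1)$ color assignment at each discovery step, so the total cost is $O(|V(T)| + |E(T)|) = O(n)$. Next I would turn to \hyperref[alg-recolor]{\texttt{RECOLORING}}. The initial scan comparing consecutive colors along $C$ is a single pass of length at most $n-1$, and the optional cyclic relabeling in line~\ref{algo:recolor-line6} writes $n$ new names, both $O(n)$. The subsequent branch calls either \hyperref[alg-case-1]{\texttt{CASE-1}} or \hyperref[alg-case-2]{\texttt{CASE-2}}, each of which consists of a \textbf{for}-loop of length $m = \lfloor n/4 \rfloor \cdot 4 \leq n$ that assigns a color to each $a_k$ in $O(1)$ time (the color $1_k$ is computed from $\varphi(b_k)$ by a constant-time lookup), plus an $O(1)$ switch on $n - m$. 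The additional passes inside \hyperref[alg-case-2]{\texttt{CASE-2}} (lines~\ref{algo:recolor-line12}--\ref{algo:recolor-line26}) are two \textbf{for}-loops of length at most $n$ with $O(1)$ work per iteration (each test inspects a constant number of $\varphi$-values and tree-neighbor identities). Hence \hyperref[alg-recolor]{\texttt{RECOLORING}} finishes in $O(n)$ time.

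For \hyperref[alg1]{\texttt{CONFLICTS-RESOLVING}}, the construction of the dictionary $\mathcal{N}$ iterates once through the $n$ pairs $(a_i, b_i)$, performing an amortized $O(1)$ hash-table insertion for each, and so takes $O(n)$ time. The main loop (line~\ref{algo4:line9}) runs $n$ iterations; at iteration $i$, forming $J = \{j : a_j \in \mathcal{N}[b_i]\} \setminus \{i\}$ and the inner \textbf{foreach} both cost $O(|\mathcal{N}[b_i]|)$, which by Lemma~\ref{claim-1} is bounded by the constant $3$. Each test inside the inner loop reads a constant number of $\varphi$-values and performs a constant number of reassignments before the \textbf{break}. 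Thus the total work is $\sum_{i=1}^{n} O(1) = O(n)$.

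The only mildly delicate point is that one must have constant-time access to $b_i$ from $a_i$ and to $\varphi$-values on vertices, and amortized constant-time dictionary operations on $\mathcal{N}$; both are standard under the usual RAM model and the adjacency-list representation built in the $O(n)$ preprocessing. Summing the three stages yields the claimed bound
\[
 T_{\textsc{PACKING-COLORING}}(G) = O(n) + O(n) + O(n) = O(|G|),
\]
which completes the proof.
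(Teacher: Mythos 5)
Your proof is correct and follows essentially the same route as the paper: a stage-by-stage audit showing the BFS $2$-coloring of $T$, \texttt{RECOLORING} (with \texttt{CASE-1}/\texttt{CASE-2}), and \texttt{CONFLICTS-RESOLVING} each run in linear time, with the inner loop of the conflict resolution bounded by the constant size of $\mathcal{N}[b_i]$. Your version is, if anything, slightly more careful than the paper's — you invoke Lemma~\ref{claim-1} for the sharper bound $|\mathcal{N}[b_i]| \leq 3$ and make the RAM-model and data-structure assumptions explicit — but these are refinements, not a different argument.
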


\begin{proof}

First, the 2-coloring process applied to the characteristic tree \(T\) using a Depth-First Search (BFS) traversal operates in \(O(t)\) time, where \(t\) represents the number of vertices in \(T\). The \hyperref[alg-recolor]{\texttt{RECOLORING}} algorithm, which encompasses the \hyperref[alg-case-1]{\texttt{CASE-1}} and \hyperref[alg-case-2]{\texttt{CASE-2}} subroutines, processes each case in \(O(n)\) time, with \(n\) being the number of vertices in the adjoint cycle \(C\). The \hyperref[alg1]{\texttt{CONFLICTS-RESOLVING}} algorithm, crucial for resolving color conflicts, also operates within \(O(n)\) time. This is because constructing the set \(\mathcal{L}\) and the dictionary \(\mathcal{N}\) each take \(O(n)\) time. The conflict resolution loop iterates over each even index \(i\) from \(1\) to \(n\). For each even \(i\), the algorithm computes the set \(J\). Given that the size of \(J\) is roughly bounded by 4 (note that $\mathcal{N}[b_j]$ has at most 5 vertices because the maximum degree of $G$ is at most 5), the inner loop iterating over \(J\) runs in constant time \(O(1)\), as it performs at most 4 iterations. Therefore, the entire conflict resolution loop retains a linear time complexity of \(O(n)\), since the outer loop is \(O(n)\) and the inner loop is \(O(1)\).
Consequently, the overall time complexity of the \hyperref[alg-final]{\texttt{PACKING-COLORING}} algorithm is $O(t+n)=O(|G|)$.
\end{proof}

\section{Open questions}\label{sec:4}

When applying the \hyperref[alg-final]{\texttt{PACKING-COLORING}} algorithm to color a Halin graph $G$ with $\Delta(G) \leq 5$, we observe that color $2_c$ is assigned to relatively few vertices. This observation leads us to investigate whether the algorithm can be modified to ensure that the minimum distance between any two vertices colored with $2_c$ exceeds the current constraint. We formalize this inquiry as follows:

\begin{question}
    Are Halin graphs with maximum degree at most 5 $(1^2,2^2,3)$-packing colorable?
\end{question}

In the \hyperref[alg-final]{\texttt{PACKING-COLORING}} algorithm, all non-leaf vertices of the characteristic tree of a Halin graph $G$ are colored $1$ or $1'$. This actually gives a special $(1^2, 2^3)$-packing coloring of $G$. Perhaps the following question is interesting:

\begin{question}
Let $G$ be a Halin graph with maximum degree $\Delta$, where $T$ is the characteristic tree and $C$ is the adjoint cycle. What is the minimum $\ell:=\ell(\Delta)$ such that $G$ admits a \textit{$(1^2, 2^\ell)$-packing coloring} where all vertices in $V(T) \setminus V(C)$ (the internal vertices of $T$ not on $C$) are colored with the first two colors labeled by $1$ in the sequence?
\end{question}

For Halin graphs with maximum degree $\Delta \geq 6$, we pose the following fundamental problem in packing coloring theory:

\begin{question}
    Determine the optimal parameters $(k,\ell)$ such that every Halin graph $G$ with maximum degree $\Delta$ admits a $(1^k,2^\ell)$-packing coloring. Specifically, how does the feasible region of $(k,\ell)$ expand as $\Delta$ increases?
\end{question}

\section*{Acknowledgments}

The authors thank Ping Chen, Mengyao Dai, and Yali Wu for participating in the initial discussion of this paper's topic. In particular, the first author extends gratitude to Weichan Liu for providing guidance on drawing the figures included in this work.

\bibliography{ref}
\bibliographystyle{abbrv}

\end{document}